\let\orgdescriptionlabel\descriptionlabel
\renewcommand*{\descriptionlabel}[1]{%
  \let\orglabel\label
  \let\label\@gobble
  \phantomsection
  \edef\@currentlabel{#1}%
  \let\label\orglabel
  \orgdescriptionlabel{#1}%
}
\newtheorem*{thma}{Theorem}
\newtheorem*{propa}{Proposition}
\newtheorem{thm}{Theorem}[section]
\newtheorem{lem}[thm]{Lemma}
\newtheorem{prop}[thm]{Proposition}
\newtheorem{cor}[thm]{Corollary}
\theoremstyle{definition}
\newtheorem{defi}[thm]{Definition}
\newtheorem{ex}[thm]{Example}
\theoremstyle{remark}
\newtheorem{rem}[thm]{Remark}
\newcommand{\op}{\normalfont{^{op}}}
\newcommand{\Mod}[1]{\operatorname{mod} #1}
\newcommand{\Hom}[3]{\operatorname{Hom}_{#1}\left(#2,#3\right) }
\newcommand{\End}[2]{\operatorname{End}_{#1}\left(#2\right) }
\newcommand{\Soc}[1]{\operatorname{Soc}#1}
\newcommand{\Top}[1]{\operatorname{Top}#1}
\newcommand{\Rad}[1]{\operatorname{Rad} #1}
\newcommand{\RAD}[2]{\operatorname{Rad}^{#1} #2}
\newcommand{\SOC}[2]{\operatorname{Soc}_{#1} #2}
\newcommand{\Ima}[1]{\operatorname{Im}#1}
\newcommand{\Ker}[1]{\operatorname{Ker}#1}
\newcommand{\Tr}[2]{\operatorname{Tr}\left(#1,#2\right)}
\newcommand{\Rej}[2]{\operatorname{Rej}\left(#1,#2\right)}
\newcommand{\Ext}[4]{\operatorname{Ext}_{#1}^{#2}\left(#3,#4 \right)}
\newcommand{\Z}{\mathbb{Z}}
\newcommand{\Znn}{\Z_{\geq 0}}
\newcommand{\Zp}{\Z_{> 0}}
\newcommand{\B}[1]{\left(#1\right)}
\newcommand{\dssl}[1]{\operatorname{\Delta.ssl}#1}
\newcommand{\Add}[1]{\operatorname{add}#1}
\newcommand{\LL}[1]{\operatorname{LL}(#1)}
\begin{document}

\title[$\Delta$-filtrations and projective resolutions for the ADR algebra]{$\Delta$-filtrations and projective resolutions for the Auslander--Dlab--Ringel algebra}
\author{Teresa Conde}
\address{Institute of Algebra and Number Theory, University of Stuttgart\\ Pfaffenwaldring 57, 70569 Stuttgart, Germany}
\email{\href{mailto:tconde@mathematik.uni-stuttgart.de}{\nolinkurl{tconde@mathematik.uni-stuttgart.de}}}
\thanks{Most of this work is contained in the author's Ph.D.~thesis. This was supported by the grant SFRH/BD/84060/2012 of Funda\c{c}\~ao para a Ci\^encia e a Tecnologia, Portugal. The author would like to express her gratitude to Stephen Donkin, Ph.D.~examiner, for the simplified version of the proofs of Lemma \ref{lem:ultrastronglyses} and Corollary \ref{cor:ultrastronglyqhdeltass} included in this article. In addition, the author would like to thank her Ph.D.~supervisor, Karin Erdmann, for many useful comments.}
\subjclass[2010]{Primary 16S50, 16W70. Secondary 16G10, 16G20.}
\keywords{Quasihereditary algebra, strongly quasihereditary algebra, ADR algebra}
\date{\today}

\begin{abstract}
The ADR algebra $R_A$ of an Artin algebra $A$ is a right ultra strongly quasihereditary algebra (RUSQ algebra). In this paper we study the $\Delta$-filtrations of modules over RUSQ algebras and determine the projective covers of a certain class of $R_A$-modules. As an application, we give a counterexample to a claim by Auslander--Platzeck--Todorov, concerning projective resolutions over the ADR algebra.
\end{abstract}

\maketitle

\section{Introduction}
\label{sec:overviewpart2}
It is natural to ask whether there exist ``Schur algebras" for arbitrary Artin algebras. That is, given an Artin algebra $A$, we would like to have an $A$-module whose endomorphism algebra is quasihereditary, so that it has finite global dimension and a highest weight theory. Such modules do exist. A suitable candidate was introduced by Auslander in \cite{MR0349747}. He showed that the endomorphism algebra of
\[G=\bigoplus_{i = 1}^{\LL{A}} A/\RAD{i}{A}\]
has finite global dimension (here $\LL{A}$ denotes the Loewy length of $A$). Subsequently, Dlab and Ringel proved that this endomorphism algebra is actually a quasihereditary algebra (\cite{MR943793}). For practical purposes one considers the basic version of $\End{A}{G}\op$ instead. We denote this `Schur-like' endomorphism algebra by $R_A$ and call it the Auslander--Dlab--Ringel algebra (ADR algebra) of $A$. The original algebra $A$ is then Morita equivalent to $\xi R_A \xi$ for an idempotent $\xi$ in $R_A$, and this is also analogous to the situation of symmetric groups and Schur algebras.

It was recently proved in \cite{MR3510398} that the ADR algebra has a particularly neat quasihereditary structure. The ADR algebra is not only right strongly quasihereditary in the sense of Ringel (\cite{RingelIyama}); $R_A$ is actually a right ultra strongly quasihereditary algebra (RUSQ algebra) as defined in \cite{MR3510398} (see also \cite[Chapter 2]{thesis}). The ADR algebra is not the only strongly quasihereditary algebra arising from a module theoretical context. Other examples of strongly quasihereditary algebras include: the Auslander algebras, associated to algebras of finite type; the endomorphism algebras constructed by Iyama, used in his famous proof of the finiteness of the representation dimension of Artin algebras (\cite{iyama}); certain cluster-tilted algebras studied by Gei{\ss}--Leclerc--Schr{\"o}er (\cite{KacMoodyAdv}, \cite{GeissLeclercSchroerArxiv}), Buan--Iyama--Reiten--Scott (\cite{MR2521253}) and Iyama--Reiten (\cite{IyamaReiten}). The cluster-tilted algebras in \cite{MR2521253}, \cite{KacMoodyAdv}, \cite{GeissLeclercSchroerArxiv} and \cite{IyamaReiten} are actually RUSQ up to dualisation, as implicitly proved in \cite{KacMoodyAdv} (see also \cite{thesis}).

This paper complements the investigation on RUSQ algebras and on ADR algebras initiated in \cite{MR3510398}. We start by studying the $\Delta$-filtrations of modules over RUSQ algebras. In Section \ref{sec:deltass}, we show that the RUSQ algebras satisfy the following key property: every submodule of a direct sum of standard modules is still a direct sum of standard modules. This has several consequences. In particular, it gives rise to special (uniquely determined) filtrations of $\Delta$-good modules over RUSQ algebras, called $\Delta$-semisimple filtrations. These can be described recursively as follows. Given a $\Delta$-good module $N$, let $\delta_1(N)$ be the largest submodule of $N$ which is a direct sum of standard modules, and for $i\geq 1$ define $\delta_{i+1}(N)$ as the module satisfying the identity $\delta_{i+1}(N)/\delta_{i} (N) = \delta_1\B{N/\delta_{i} (N)}$. 

The ideas in Section \ref{sec:deltass} are then applied to the ADR algebra. As a main contribution of Section \ref{sec:manuela}, we prove the following (which corresponds to Lemma \ref{lem:soc0} and Theorem~\ref{thm:socdelta}).
\begin{thma}
Let $M$ be in $\Mod{A}$. Then $N=\Hom{A}{G}{M}$ lies in $\mathcal{F}\B{\Delta}$ and the socle series of $M$ determines the $\Delta$-semisimple filtration of $N$. Formally,
\[
\delta_m \B{N} = \Hom{A}{G}{\SOC{m}{M}},
\]
for all $m$. The factors of the $\Delta$-semisimple filtration of $N$ only depend on the factors of the socle series of $M$ and on the Loewy length of the projective indecomposable $A$-modules.
\end{thma}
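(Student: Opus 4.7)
The plan is to establish both assertions of the theorem by induction on $m$, treating the base case $\delta_1(N) = \Hom{A}{G}{\Soc{M}}$ in a separate lemma. The statement $N \in \mathcal{F}(\Delta)$ then falls out: once the formula is known, the chain $\{\Hom{A}{G}{\SOC{m}{M}}\}_m$ exhausts $N$ (since $M$ has finite Loewy length) and has $\Delta$-semisimple factors by construction.

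\emph{Base case.} The easy inclusion $\Hom{A}{G}{\Soc{M}} \subseteq \delta_1(N)$ holds because $\Soc{M}$ is semisimple: writing it as a direct sum of simple $A$-modules $S = P_S/\Rad{P_S}$, each $\Hom{A}{G}{S}$ equals the indecomposable projective $P(P_S, 1)$ of $R_A$, which coincides with the standard $\Delta(P_S, 1)$ since $(P_S, 1)$ is maximal in the RUSQ order of $R_A$. For the reverse inclusion I would invoke the identification
\[
\Hom{A}{G}{\Soc{M}} = \{f \in N : f(\Rad{A} \cdot G) = 0\},
\]
namely the $R_A$-submodule of $N$ consisting of maps $G \to M$ that factor through the semisimple top $G/\Rad{A} G$, and then show that any summand $\Delta(P, i) \subseteq \delta_1(N)$ lies in this set. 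The main computation is that the trace of the strictly larger projectives inside $P(P, i) = \Hom{A}{G}{P/\RAD{i}{P}}$ coincides with $\Hom{A}{G}{\Rad{P}/\RAD{i}{P}}$. Lifting an embedding $\Delta(P, i) \hookrightarrow N$ along $P(P, i) \twoheadrightarrow \Delta(P, i)$ and using the equivalence $\Hom{A}{G}{-} \colon \Add{G} \xrightarrow{\sim} \PROJ{R_A}$, the embedding is represented by an $A$-homomorphism $\hat f \colon P/\RAD{i}{P} \to M$ for which $\hat f \circ \phi = 0$ for every $\phi \colon G \to \Rad{P}/\RAD{i}{P}$. Since $G$ is a generator of $\Mod{A}$ this forces $\hat f|_{\Rad{P}/\RAD{i}{P}} = 0$, so $\hat f$ factors through $P/\Rad{P}$ and its image is contained in $\Soc{M}$.

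\emph{Induction step.} Assume $\delta_{m-1}(N) = \Hom{A}{G}{\SOC{m-1}{M}}$. Applying $\Hom{A}{G}{-}$ to $0 \to \SOC{m-1}{M} \to M \to M/\SOC{m-1}{M} \to 0$ yields, by left exactness, an embedding $N/\delta_{m-1}(N) \hookrightarrow \Hom{A}{G}{M/\SOC{m-1}{M}}$. The base case applied to $M' = M/\SOC{m-1}{M}$ gives $\delta_1(\Hom{A}{G}{M'}) = \Hom{A}{G}{\SOC{m}{M}/\SOC{m-1}{M}}$. The key property from Section \ref{sec:deltass} implies that $\delta_1(Y) = Y \cap \delta_1(Z)$ for any submodule $Y \subseteq Z$ of a $\Delta$-good module (the intersection is a submodule of a direct sum of standards and hence a direct sum of standards). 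Taking $Y = N/\delta_{m-1}(N)$ inside $Z = \Hom{A}{G}{M'}$ and pulling the resulting submodule back to $N$ produces exactly the maps $G \to M$ whose image lies in $\SOC{m}{M}$, i.e., $\Hom{A}{G}{\SOC{m}{M}}$. Hence $\delta_m(N) = \Hom{A}{G}{\SOC{m}{M}}$, and the last clause of the theorem is immediate.

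I expect the main obstacle to be the reverse inclusion of the base case: it is here that the specific structure of the standards of the ADR algebra (the trace computation identifying the kernel of $P(P, i) \twoheadrightarrow \Delta(P, i)$) is essential, rather than any formal property of general RUSQ algebras. The induction step and the consequence $N \in \mathcal{F}(\Delta)$ are then formal consequences of left exactness of $\Hom{A}{G}{-}$ and the intersection property for $\delta_1$ established in Section~\ref{sec:deltass}.
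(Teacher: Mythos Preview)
Your argument for the identity $\delta_m(N) = \Hom{A}{G}{\SOC{m}{M}}$ is correct and differs from the paper's in both stages. For the reverse inclusion in the base case you lift a map out of a standard module to the projective $P_{i,j}$ and use the explicit kernel $\Hom{A}{G}{\Rad{P_i}/\RAD{j}{P_i}}$ of $P_{i,j}\twoheadrightarrow\Delta(i,j)$ from Theorem~\ref{prop:standard} to force the underlying $A$-map to factor through the simple top; the paper instead appeals to the fact that $\Hom{A}{G}{-}$ preserves injective hulls, so that $\Hom{A}{G}{\Soc{M}}$ and $\delta(N)$ share a socle and must then coincide. (A small wording point: your lifting argument works for \emph{any} map $\Delta(i,j)\to N$, not just embeddings, and that is what you need in order to bound the trace without presupposing that $\delta(N)$ is $\Delta$-semisimple.) For the induction step you use the hereditary property $\delta(Y)=Y\cap\delta(Z)$ of Lemma~\ref{lem:Deltahereditary} directly, which is cleaner than the paper's route through the socle comparison of Corollary~\ref{cor:neweasyses} and a $\Delta$-filtration count.

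What your shortcut bypasses is the explicit description of the factors. The paper's Lemma~\ref{lem:soc0} proves that $\Hom{A}{G}{\SOC{j}{M}}/\Hom{A}{G}{\SOC{j-1}{M}}\cong\bigoplus_\theta\Delta(x_\theta,j)$ whenever $\SOC{j}{M}/\SOC{j-1}{M}\cong\bigoplus_\theta L_{x_\theta}$, and it is this formula that establishes the final clause of the theorem. Your claim that the last clause is ``immediate'' is therefore too quick: the main identity only tells you that the $m$-th factor embeds in $\Hom{A}{G}{\SOC{m}{M}/\SOC{m-1}{M}}$, and determining its isomorphism type independently of how the socle layers of $M$ are glued together requires one further step --- computing the top of the factor, essentially via the trace identity that opens Lemma~\ref{lem:soc0}. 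This is not hard with your methods, but it is not automatic either.
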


Next, we describe the right minimal $\Add{G}$-approximations of rigid modules in $\Mod{A}$, or equivalently, the projective covers of the $R_A$-modules $\Hom{A}{G}{M}$, with $M$ rigid. Recall that a module is said to be rigid if its radical series coincides with its socle series. We prove the following theorem in Section \ref{sec:thma}.
\begin{thma}
Let $M$ be a rigid module in $\Mod{A}$, with Loewy length $m$. Then the projective cover of $M$ in $\Mod{(A/ \RAD{m}{A})}$ is a right minimal $\Add{G}$-approximation of $M$.
\end{thma}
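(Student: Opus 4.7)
The plan is to show, for $\pi\colon P\twoheadrightarrow M$ the projective cover of $M$ regarded as a module in $\Mod{(A/\RAD{m}{A})}$, that: (i) $P\in\Add{G}$; (ii) every morphism $g\colon X\to M$ with $X\in\Add{G}$ factors through $\pi$; and (iii) $\pi$ is right minimal. Point (iii) is automatic since projective covers are right minimal. For (i), $P$ is a direct summand of $(A/\RAD{m}{A})^n$ for some $n$, and $A/\RAD{m}{A}$ is itself a direct summand of $G$; here $m\le \LL{A}$, as $M$ has Loewy length $m$.

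The heart of the proof is (ii), and by additivity it suffices to verify it when $X=A/\RAD{i}{A}$ with $1\le i\le \LL{A}$. I would split into two cases. If $i\ge m$, then $M$ is annihilated by $\RAD{m}{A}$, so $g$ factors through the canonical surjection $A/\RAD{i}{A}\twoheadrightarrow A/\RAD{m}{A}$; the resulting map $A/\RAD{m}{A}\to M$ lifts through $\pi$ by projectivity of $A/\RAD{m}{A}$ in $\Mod{(A/\RAD{m}{A})}$, giving the desired factorisation of $g$.

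The subtle case is $i<m$, and this is where the hypothesis that $M$ is rigid enters crucially. Functoriality places $\Ima g\subseteq\SOC{i}{M}$, and rigidity combined with $\LL{M}=m$ identifies $\SOC{i}{M}$ with $\RAD{m-i}{M}$. Since $\pi$ is surjective and $\RAD{j}{N}=\RAD{j}{A}\cdot N$ for finitely generated $N$, the map $\pi$ restricts to a surjection $\RAD{m-i}{P}\twoheadrightarrow\RAD{m-i}{M}$. Both of these modules are annihilated by $\RAD{i}{A}$ (because $P$ and $M$ are annihilated by $\RAD{m}{A}$), so they belong to $\Mod{(A/\RAD{i}{A})}$; since $A/\RAD{i}{A}$ is projective over itself, the corestricted morphism $g\colon A/\RAD{i}{A}\to\RAD{m-i}{M}$ lifts to $g'\colon A/\RAD{i}{A}\to\RAD{m-i}{P}$, and composing with $\RAD{m-i}{P}\hookrightarrow P$ produces the required factorisation of $g$ through $\pi$. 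The whole weight of the argument sits here: without rigidity, the image of $g$ need only lie in $\SOC{i}{M}$, possibly strictly larger than $\RAD{m-i}{M}$, and the natural restriction of $\pi$ hitting this image would not a priori be surjective, blocking the lifting.
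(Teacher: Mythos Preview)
Your proof is correct and follows essentially the same approach as the paper's: right minimality is immediate, and the factorisation property is checked on generators of $\Add{G}$ by splitting into the cases $i\ge m$ (direct lift by projectivity) and $i<m$ (use rigidity to get $\Ima g\subseteq\SOC{i}{M}=\RAD{m-i}{M}$, then lift along the restricted surjection $\RAD{m-i}{P}\twoheadrightarrow\RAD{m-i}{M}$ in $\Mod{(A/\RAD{i}{A})}$). The only cosmetic differences are that the paper tests against the indecomposable summands $P_i/\RAD{j}{P_i}$ rather than $A/\RAD{i}{A}$, and phrases the surjectivity of $\RAD{m-i}{\pi}$ in terms of cohereditary preradicals.
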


This simple yet useful result, combined with the conclusions in Section \ref{sec:manuela}, is then used to provide a counterexample to a claim by Auslander, Platzeck and Todorov in \cite[§7]{MR1052903}, about the projective resolutions of modules over the ADR algebra, for which no proof was given. To be precise, we show the following.
\begin{propa}
ADR algebras need not satisfy the descending Loewy length condition on projective resolutions.
\end{propa}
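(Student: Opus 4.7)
The plan is to disprove the descending Loewy length condition by producing an explicit small counterexample: choose a concrete Artin algebra $A$ (most naturally a Nakayama algebra, or a bound path algebra with a small number of vertices and some asymmetry in the Loewy lengths of the projective indecomposables), describe its ADR algebra $R_A$ together with the generator $G$, pick a rigid $A$-module $M$, and then compute an initial segment of a minimal projective resolution of $\Hom{A}{G}{M}$ over $R_A$ using the two theorems quoted from Sections \ref{sec:manuela} and \ref{sec:thma}. The goal is to exhibit two consecutive projectives $P_i$ and $P_{i+1}$ in this resolution for which $\LL{P_{i+1}} > \LL{P_i}$.

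The computation proceeds inductively on the $A$-side. Since $M$ is rigid of Loewy length $m$, the second theorem identifies the right minimal $\Add{G}$-approximation of $M$ with the projective cover $Q_0 \to M$ in $\Mod{A/\RAD{m}{A}}$, so that $P_0 = \Hom{A}{G}{Q_0}$ is the projective cover of $\Hom{A}{G}{M}$ in $\Mod{R_A}$. Let $K_0 = \Ker{(Q_0 \to M)}$. Since $\Hom{A}{G}{-}$ is fully faithful on $\Add{G}$, the first syzygy of $\Hom{A}{G}{M}$ is $\Hom{A}{G}{K_0}$. Provided $K_0$ is again rigid (or rigid on each indecomposable summand), the second theorem applies once more to produce $P_1$, and so on. At each step, one reads off the Loewy length of the corresponding projective $R_A$-module from the $\Delta$-semisimple filtration described in the first theorem, applied to each summand $P/\RAD{j}{P}$ appearing in $Q_i$; this filtration, and hence the Loewy length, is determined by the Loewy length $j$ together with the structure of the indecomposable summands of $G$.

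The main obstacle is engineering an example in which the Loewy length actually goes up. Passing to a quotient $A/\RAD{m}{A}$ truncates the projective covers and tends to decrease the relevant Loewy length, so one must find a syzygy $K_i$ whose indecomposable summands, although their own Loewy lengths are at most $m$, give rise to $\Add{G}$-approximations involving summands $P/\RAD{j}{P}$ for which $\LL{\Hom{A}{G}{P/\RAD{j}{P}}}$ as an $R_A$-module exceeds the corresponding quantity one step earlier. This is genuinely possible because the Loewy length of $\Hom{A}{G}{P/\RAD{j}{P}}$ depends not only on $j$ but on the homomorphism structure between the various $P/\RAD{i}{P}$, and hence reflects more global features of $A$ than does the $A$-side Loewy length. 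A Nakayama algebra with two simples whose projective indecomposables have distinct Loewy lengths is a natural testing ground, since the mismatch in Loewy lengths is exactly the kind of asymmetry one needs.

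Once such an $A$ and $M$ are identified, the verification reduces to finite bookkeeping: list the indecomposable summands of $G$, tabulate their Loewy lengths as $R_A$-modules via the first theorem, compute $Q_0$ and $K_0$ using the second theorem, verify rigidity of $K_0$ (or handle each summand directly), repeat for $K_1$, and compare $\LL{P_0}$, $\LL{P_1}$, $\LL{P_2}$. The failure of the Loewy length condition should already be visible at a low step, giving the desired counterexample to the claim in \cite[§7]{MR1052903}.
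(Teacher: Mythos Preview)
Your proposal is a reasonable outline of a strategy, but it is not yet a proof: the statement asserts the existence of a counterexample, and you have not produced one. The paper does something much more concrete and rather different in flavour. It takes a specific three-vertex bound quiver algebra $A$ carrying a loop $\varepsilon$ at one vertex subject to $\varepsilon^n=0$, with $n$ a free parameter. The point of the loop is that it forces one standard module, namely $\Delta(1,1)$, to have Loewy length $n$, while a certain projective $P_{3,3}=\Hom{A}{G}{P_3}$ has Jordan--H\"older length $6$ independently of $n$. The paper then looks at the $R_A$-module $M=P_{3,3}/L_{3,3}$, so that $P_{i+1}(M)=P_i(L_{3,3})$; using Corollary~\ref{cor:lemmaa} on the rigid $A$-module $\Rad P_3$ one gets $P_1(L_{3,3})=P_{2,2}$, and since $P_{2,2}$ contains a copy of $\Delta(1,1)$ it has Loewy length at least $n+1$. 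For $n\geq 5$ this already exceeds $\LL{P_{3,3}}\leq 6$. So rather than iterating the approximation procedure and hoping the Loewy length rises, the paper engineers an algebra where a single projective can be made arbitrarily long by tuning $n$, and then only needs to compare two consecutive terms.

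Two further cautions about your plan. First, your suggested testing ground of Nakayama algebras with two simples is unlikely to work: for such algebras the ADR algebra is very close to an Auslander algebra and the homomorphism structure is too uniform to create the asymmetry you describe; the paper's algebra is decidedly not Nakayama, and the loop is the essential mechanism. Second, you propose to read off the Loewy length of $\Hom{A}{G}{P/\RAD{j}{P}}$ from its $\Delta$-semisimple filtration, but Theorem~\ref{thm:socdelta} gives you $\dssl{\Hom{A}{G}{M}}=\LL{M}$, which is the $\Delta$-semisimple length and not the Loewy length; the latter depends on how the standard modules in consecutive layers are glued, and the paper avoids this difficulty entirely by bounding one Loewy length from above via a dimension count and the other from below via a long uniserial submodule.
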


\section{Preliminaries}
In this section we introduce the language of preradicals and give some background on quasihereditary algebras, RUSQ algebras and the ADR algebra.

Throughout this paper the letters $B$ and $A$ shall denote arbitrary Artin algebras over some commutative artinian ring $C$. All the modules will be finitely generated left modules. The notation $\Mod{B}$ will be used for the category of (finitely generated) $B$-modules. Given a class of $B$-modules $\Theta$, let $\Add{\Theta}$ be the full subcategory of $\Mod{B}$ consisting of all modules isomorphic to a summand of a direct sum of modules in $\Theta$. The additive closure of a single module $M$ is denoted by $\Add{M}$.
\subsection{Preradicals}
\label{sec:preradicals}
Preradicals generalise the classic notions of radical and socle of a module. The results and definitions stated in this section are elementary and most of the proofs may be found in \cite{MR655412}, \cite[Chapter 2]{MR2253001} and \cite[Chapter VI]{MR0389953} (see also \cite[Section $1.3$]{thesis}). 

\subsubsection{Definition and first properties}
\label{subsec:defisandfirst}
\begin{defi}
\label{defi:preradical}
A \emph{preradical}\index{preradical} $\tau$ in $\Mod{B}$ is a subfunctor of the identity functor $1_{\Mod{B}}$, i.e., $\tau$ assigns to each module $M$ a submodule $\tau\B{M}$, such that each morphism $f:M\longrightarrow N$ induces a morphism $\tau\B{f}: \tau\B{M} \longrightarrow \tau\B{N}$ given by restriction.
\end{defi}

A submodule $N$ of a $B$-module $M$ is called a \emph{characteristic submodule} of $M$ if $f\B{N} \subseteq N$, for every $f$ in $\End{B}{M}$. By definition, it is clear that the module $\tau\B{M}$ is a characteristic submodule of $M$, for every preradical $\tau$ and for every module $M$. It is also evident that every preradical is an additive functor which preserves monics.

To each preradical $\tau$ we may associate the functor
\[
1/ \tau: \Mod{B} \longrightarrow \Mod{B},
\]
which maps $M$ to $M/\tau\B{M}$. Note that the functor $1/ \tau$ preserves epics.

\begin{ex}
\label{ex:tracereject}
For any class $\Theta$ of $B$-modules, the operators defined by
\begin{gather*}
\Tr{\Theta}{M}:=\sum_{f: \, f\in \Hom{B}{U}{M},\, U \in \Theta} \Ima{f},\\
\Rej{M}{\Theta}:=\bigcap_{f: \, f\in \Hom{B}{M}{U},\, U \in \Theta} \Ker{f},
\end{gather*}
for $M$ in $\Mod{B}$, are preradicals in $\Mod{B}$. The module $\Tr{\Theta}{M}$, called the \emph{trace of $\Theta$ in $M$}, is the largest submodule of $M$ generated by $\Theta$. Symmetrically, $\Rej{M}{\Theta}$, the \emph{reject of $\Theta$ in $M$}, is the submodule $N$ of $M$ such that $M/N$ is the largest factor module of $M$ cogenerated by $\Theta$. If $\varepsilon$ is a complete set of simple $B$-modules, then $\Tr{\varepsilon}{-}= \Soc{(-)}$ and $\Rej{-}{\varepsilon} = \Rad{(-)}$.
\end{ex}

The statements below are immediate consequences of the definition of preradical.

\begin{lem}
\label{lem:preradicalprops}
Let $\tau$ be a preradical in $\Mod{B}$. Let $N$ and $M$ be $B$-modules, with $N \subseteq M$, and let $(M_i)_{i \in I}$ be a finite family of $B$-modules. The following hold:
\begin{enumerate}
\item $\tau\B{N}\subseteq N\cap \tau\B{M}$;
\item $\B{\tau\B{M}+N}/N \subseteq \tau\B{M/N}$;
\item $\tau\B{\bigoplus_{i \in I} M_i}=\bigoplus_{i \in I} \tau\B{M_i}$.
\end{enumerate}
\end{lem}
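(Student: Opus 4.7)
The plan is to derive all three statements directly from the definition of a preradical as a subfunctor of the identity, exploiting the two remarks made just before the statement: every preradical preserves monomorphisms (because it is an additive subfunctor of the identity) and $\tau(X) \subseteq X$ for every module $X$.

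For part (1), I would apply $\tau$ to the inclusion $\iota: N \hookrightarrow M$. Since $\tau$ is a subfunctor of $1_{\Mod{B}}$, the induced map $\tau(\iota): \tau(N) \to \tau(M)$ is simply the restriction of $\iota$, which is still injective; combined with the fact that $\tau(N) \subseteq N$ (applying $\tau$ to $N$ directly), this yields $\tau(N) \subseteq N \cap \tau(M)$. For part (2), I would apply $\tau$ to the canonical surjection $\pi: M \twoheadrightarrow M/N$. The induced morphism $\tau(\pi): \tau(M) \to \tau(M/N)$ is again a restriction of $\pi$, so its image is $\pi(\tau(M)) = (\tau(M)+N)/N$, and since this image sits inside $\tau(M/N)$ by functoriality, the claimed inclusion follows immediately.

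For part (3), I would use the universal maps of the biproduct. The canonical injections $\iota_j: M_j \hookrightarrow \bigoplus_{i \in I} M_i$ give, via part (1), inclusions $\tau(M_j) \subseteq \tau(\bigoplus_{i \in I} M_i)$; summing over $j$ (and using that the sum is direct inside $\bigoplus M_i$) produces $\bigoplus_{i \in I} \tau(M_i) \subseteq \tau(\bigoplus_{i \in I} M_i)$. For the reverse inclusion, I would use the canonical projections $\pi_j: \bigoplus_{i \in I} M_i \to M_j$: functoriality forces $\pi_j(\tau(\bigoplus_{i} M_i)) \subseteq \tau(M_j)$, so every element of $\tau(\bigoplus_{i} M_i)$ has all of its components in the respective $\tau(M_i)$, and hence lies in $\bigoplus_{i} \tau(M_i)$. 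Finiteness of $I$ is what allows me to identify the biproduct with both the product (for the projections) and the coproduct (for the injections) at the level of underlying modules.

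No single step is really an obstacle; the only mild subtlety is being careful to distinguish between the \emph{internal} sum $\sum_j \tau(M_j)$ of submodules of $\bigoplus_i M_i$ and the \emph{external} direct sum $\bigoplus_j \tau(M_j)$, and to verify that the former is direct so that the two agree. Once that is noted, each of (1), (2), (3) reduces to a one-line consequence of the definition plus the preservation of monics.
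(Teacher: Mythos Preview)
Your proposal is correct and matches the paper's approach: the paper states that these properties are ``immediate consequences of the definition of preradical'' and omits the proof entirely, and your argument simply spells out those immediate consequences.
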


\subsubsection{Hereditary and cohereditary preradicals}
\label{subsec:herecohere}
We shall now look at preradicals which satisfy specific properties.

\begin{defi}
\label{defi:idempotent}
A preradical $\tau$ is called \emph{idempotent} if $\tau \circ \tau=\tau$. Symmetrically, we say that $\tau$ is a \emph{radical} if $\tau \circ \B{1/ \tau} =0$. 
\end{defi}
Note that $\Tr{\Theta}{-}$ is an idempotent preradical for every class of $B$-modules $\Theta$. Similarly, the functor $\Rej{-}{\Theta}$ is a radical.

\begin{defi}
\label{defi:hereditary}
A preradical $\tau$ is \emph{hereditary} if $\tau\B{N}=N \cap \tau\B{M}$, for all $M$ and $N$ in $\Mod{B}$ such that $N\subseteq M$. Dually, a preradical $\tau$ is said to be \emph{cohereditary} if $\B{\tau\B{M}+N}/N = \tau\B{M/N}$ for $N\subseteq M$, $M$ and $N$ in $\Mod{B}$.
\end{defi}
\begin{ex}
\label{ex:socrad}
The functors $\Soc{\B{-}}$ and $\Rad{\B{-}}$ are the typical examples of a hereditary preradical and of a cohereditary preradical, respectively.
\end{ex}
\begin{lem}[{\cite[Chapter VI, §1]{MR0389953}}]
\label{lem:hereditary}
Let $\tau$ be a preradical in $\Mod{B}$. The following statements are equivalent:
\begin{enumerate}
\item $\tau$ is hereditary;
\item $\tau$ is a left exact functor;
\item the functor $1/\tau$ preserves monics.
\end{enumerate}
Moreover, any hereditary preradical is idempotent.
\end{lem}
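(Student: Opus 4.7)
The plan is to prove a cyclic chain of implications among (1), (2), (3), and then deduce idempotency as a short corollary. I would organize things so that the hereditary condition is seen to be nothing more than a reformulation of left exactness via short exact sequences; this avoids any diagram chasing heavier than tracking kernels of restrictions.

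First I would prove $(1) \Leftrightarrow (3)$ directly from the definitions. For any $N \subseteq M$, the map $1/\tau$ sends the inclusion to the canonical map $N/\tau(N) \to M/\tau(M)$, whose kernel is $(N \cap \tau(M))/\tau(N)$. This kernel vanishes for all inclusions $N \subseteq M$ precisely when $\tau(N) = N \cap \tau(M)$ for all such pairs, which is exactly the hereditary condition. Note that Lemma \ref{lem:preradicalprops}(1) gives the inclusion $\tau(N) \subseteq N \cap \tau(M)$ for free, so only the reverse inclusion carries content.

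Next I would prove $(1) \Rightarrow (2)$. Since $\tau$ is additive and always preserves monics (any restriction of an injective map is injective), it suffices to check that $\tau$ preserves kernels of arbitrary morphisms; equivalently, given a short exact sequence $0 \to N \to M \xrightarrow{g} P \to 0$ with $N \subseteq M$, the sequence $0 \to \tau(N) \to \tau(M) \to \tau(P)$ is exact at $\tau(M)$. The map $\tau(g)$ is the restriction of $g$, so its kernel is $\tau(M) \cap N$, which by hereditariness equals $\tau(N)$. For the converse $(2) \Rightarrow (1)$, I would apply left exactness to the sequence $0 \to N \to M \to M/N \to 0$: this yields $\tau(N) = \ker(\tau(M) \to \tau(M/N))$. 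Since $\tau(M/N)$ sits inside $M/N$, that kernel coincides with $\tau(M) \cap N$, giving hereditariness.

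Finally, for idempotency, apply the hereditary property to the characteristic submodule $\tau(M) \subseteq M$: this gives $\tau(\tau(M)) = \tau(M) \cap \tau(M) = \tau(M)$, so $\tau \circ \tau = \tau$.

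I do not anticipate any substantive obstacle: the entire argument is a direct unwinding of the definition of a preradical together with the identity $\ker(\text{restriction of }g) = (\text{domain}) \cap \ker g$. The only point that requires a hint of care is being explicit about the identification of the kernel of the induced map $\tau(M) \to \tau(M/N)$ with $\tau(M) \cap N$ in the $(2) \Rightarrow (1)$ step, since one must use that $\tau(M/N) \hookrightarrow M/N$ is a monomorphism to pass between the kernel inside $\tau(M/N)$ and the kernel inside $M/N$.
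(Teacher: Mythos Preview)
Your proof is correct. The paper does not supply its own proof of this lemma; it merely records the statement with a citation to Stenstr\"om's book, so there is nothing to compare against beyond confirming that your argument is sound, which it is.
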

\begin{rem}
\label{rem:dual-hereditary-cohereditary}
There is a result `dual' to Lemma \ref{lem:hereditary} for cohereditary preradicals.
\end{rem}

It is possible to construct hereditary (and cohereditary) preradicals out of special classes of modules.
\begin{defi}
\label{defi:hereditaryclass}
A class $\blacktriangle$ of modules in $\Mod{B}$ is \emph{hereditary} if every submodule of a module in $\Add{\blacktriangle}$ is generated by $\blacktriangle$. 
\end{defi}
\begin{lem}
\label{lem:hereditaryclass}
If $\blacktriangle$ is a hereditary class of modules then $\Tr{\blacktriangle}{-}$ is a hereditary preradical in $\Mod{B}$.
\end{lem}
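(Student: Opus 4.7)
The plan is to show the nontrivial inclusion in the hereditary identity; the inclusion $\Tr{\blacktriangle}{N} \subseteq N \cap \Tr{\blacktriangle}{M}$ is automatic from Lemma \ref{lem:preradicalprops}(1), so what remains is to prove $N \cap \Tr{\blacktriangle}{M} \subseteq \Tr{\blacktriangle}{N}$ whenever $N \subseteq M$.

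First, I would exhibit $\Tr{\blacktriangle}{M}$ as a quotient of a module in $\Add{\blacktriangle}$. Since every module is finitely generated, the submodule $\Tr{\blacktriangle}{M}$ is itself finitely generated, so we can choose finitely many modules $U_1, \dots, U_n \in \blacktriangle$ and morphisms $f_i : U_i \longrightarrow M$ such that $\Tr{\blacktriangle}{M} = \sum_i \Ima{f_i}$. Setting $U = \bigoplus_i U_i \in \Add{\blacktriangle}$ and letting $p : U \longrightarrow M$ be the morphism whose components are the $f_i$, we obtain a factorisation of $p$ through a surjection $\bar p : U \twoheadrightarrow \Tr{\blacktriangle}{M}$.

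Next, I would consider the preimage $K = p^{-1}(N \cap \Tr{\blacktriangle}{M})$, which is a submodule of $U$. Since $\blacktriangle$ is a hereditary class and $U \in \Add{\blacktriangle}$, the module $K$ is generated by $\blacktriangle$, i.e.\ there exist modules $V_j \in \blacktriangle$ and morphisms $g_j : V_j \longrightarrow K$ whose images jointly span $K$. Composing with the inclusion $K \hookrightarrow U$ and with $p$, one gets morphisms $p \circ g_j : V_j \longrightarrow M$ whose images lie in $p(K) = N \cap \Tr{\blacktriangle}{M} \subseteq N$, and which jointly generate $p(K) = N \cap \Tr{\blacktriangle}{M}$. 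By the universal property of images, each $p \circ g_j$ factors through the inclusion $N \hookrightarrow M$, giving morphisms $V_j \longrightarrow N$ whose images sum to $N \cap \Tr{\blacktriangle}{M}$. Hence $N \cap \Tr{\blacktriangle}{M} \subseteq \Tr{\blacktriangle}{N}$, proving the hereditary identity.

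There is no real obstacle here; the only point to watch is that the hereditary class hypothesis must be applied to a module in $\Add{\blacktriangle}$ rather than in $\blacktriangle$ itself, which is why I pass to a finite direct sum $U$ at the start. The argument ultimately amounts to lifting the intersection up to $U$, using heredity there, and pushing back down into $N$.
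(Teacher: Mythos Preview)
Your proof is correct and essentially identical to the paper's. The paper phrases the key step as a pullback square of the epimorphism $M' \twoheadrightarrow \Tr{\blacktriangle}{M}$ along the inclusion $N \cap \Tr{\blacktriangle}{M} \hookrightarrow \Tr{\blacktriangle}{M}$, but in $\Mod{B}$ that pullback is precisely your preimage $K = p^{-1}(N \cap \Tr{\blacktriangle}{M})$; the remainder of the argument --- using heredity to see that $K$ is generated by $\blacktriangle$, then pushing forward to conclude that $N \cap \Tr{\blacktriangle}{M}$ is generated by $\blacktriangle$ --- is the same.
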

\begin{proof}
Consider $N\subseteq M$, with $M$ and $N$ in $\Mod{B}$. The module $\Tr{\blacktriangle}{M}$ is generated by some module $M'$ which is a (finite) direct sum of modules in $\blacktriangle$. Consider the pullback square
\[
\begin{tikzcd}[ampersand replacement=\&]
M'' \arrow[two heads, dashed]{r} \arrow[hook, dashed]{d} \& N \cap \Tr{\blacktriangle}{M} \arrow[hook]{d} \\
M' \arrow[two heads]{r} \& \Tr{\blacktriangle}{M}
\end{tikzcd}.
\]
As $\blacktriangle$ is a hereditary class, $M''$ is generated by $\blacktriangle$. Hence $N \cap \Tr{\blacktriangle}{M}$ is generated by $\blacktriangle$ as well. Since $\Tr{\blacktriangle}{N}$ is the largest submodule of $N$ generated by $\blacktriangle$, we must have $N \cap \Tr{\blacktriangle}{M} \subseteq \Tr{\blacktriangle}{N}$.
\end{proof}

\subsection{Quasihereditary algebras, RUSQ algebras and the ADR algebra}
We now introduce some notation and state basic results about RUSQ algebras and ADR algebras.
\subsubsection{Quasihereditary algebras}
Given an Artin algebra $B$, we may label the isomorphism classes of simple $B$-modules by the elements of a finite poset $(\Phi , \sqsubseteq )$. Denote the simple $B$-modules by $L_i$, $i \in \Phi$, and use the notation $P_i$ (resp.~$Q_i$) for the projective cover (resp.~injective hull) of $L_i$. 

Let $\Delta\B{i}$ be the \emph{standard module} with label $i\in\Phi$, that is
\[\Delta\B{i}= P_i/\Tr{\bigoplus_{j:\, j\not\sqsubseteq i} P_j}{P_i}.\]
The module $\Delta\B{i}$ is the largest quotient of $P_i$ whose composition factors are all of the form $L_j$, with $j\sqsubseteq i$. Dually, denote the \emph{costandard modules} by $\nabla\B{i}$. The module 
\[\nabla\B{i}=\Rej{Q_i}{\bigoplus_{j:\, j\not\sqsubseteq i} Q_j}\]
is the largest submodule of $Q_i$ with all composition factors of the form $L_j$, with $j \sqsubseteq i$. 

Let $\mathcal{F}\B{\Delta}$ be the category of all $B$-modules which have a $\Delta$-filtration, that is, a filtration whose factors are standard modules. The category $\mathcal{F}\B{\nabla}$ is defined dually. We call $M \in \mathcal{F}(\Delta)$ a \emph{$\Delta$-good module}.

The notation $[M:L]$ will be used for the multiplicity of a simple module $L$ in the composition series of $M$. In a similar manner, $(M:\Delta (i))$ shall denote the multiplicity of $\Delta (i)$ in a $\Delta$-filtration of a module $M$ in $\mathcal{F}(\Delta)$ (this is well defined). 
\begin{defi}
We say that $\B{B,\Phi, \sqsubseteq}$ is \emph{quasihereditary} if the following hold for every $i \in \Phi$:
\begin{enumerate}
\item $[\Delta (i): L_i]=1$;
\item $P_i \in \mathcal{F}\B{\Delta}$;
\item $\B{P_i: \Delta\B{i}}=1$, and $\B{P_i: \Delta\B{j}}\neq 0 \Rightarrow j\sqsupseteq i$.
\end{enumerate} 
\end{defi}

Let $\B{B,\Phi, \sqsubseteq}$ be quasihereditary. It was proved by Ringel in \cite{last} (see also \cite{donkin}) that there is a unique indecomposable $B$-module $T\B{i}$ (up to isomorphism) for every $i$ which has both a $\Delta$- and a $\nabla$-filtration, with one composition factor labelled by $i$, and all the other composition factors labelled by $j$, $j\sqsubset i$. The standard module $\Delta(i)$ can be embedded in $T(i)$ -- the corresponding monomorphism is a left minimal $\mathcal{F}(\nabla)$-approximation of $\Delta (i)$ and $T(i)/ \Delta (i)$ lies in $\mathcal{F}\B{\Delta}$.
\subsubsection{Strongly and ultra strongly quasihereditary algebras}
Following Ringel (\cite{RingelIyama}), a quasihereditary algebra $\B{B, \Phi, \sqsubseteq}$ is said to be \emph{right strongly quasihereditary} if $\Rad{\Delta\B{i}} \in \mathcal{F}\B{\Delta}$ for all $i \in \Phi$. This property holds if and only if the category $\mathcal{F}\B{\Delta}$ is closed under submodules (see \cite{yey}, \cite[Lemma $4.1$*]{MR1211481} and \cite[Appendix]{RingelIyama}). 

Let $(B, \Phi , \sqsubseteq )$ be an arbitrary quasihereditary algebra, as before. Additionally, suppose that $B$ satisfies the following two conditions:
\begin{description}
\item[(A1)\label{item:A1n}] $\Rad{\Delta\B{i}} \in \mathcal{F}\B{\Delta}$ for all $i \in \Phi$ (i.e.~$B$ is right strongly quasihereditary);
\item[(A2)\label{item:A2n}] $Q_i \in \mathcal{F}\B{\Delta}$ for all $i \in \Phi$ such that $\Rad{\Delta\B{i}}=0$.
\end{description}
We call these algebras \emph{right ultra strongly quasihereditary} algebras (RUSQ algebras, for short). 
\begin{rem}
It was proved in \cite[§$2.5.1$]{thesis} that the definition of RUSQ algebra given in \cite{MR3510398} is equivalent to the one above.
\end{rem}

Let $\B{B,\Phi, \sqsubseteq }$ be a RUSQ algebra. It is always possible to label the elements in $\Phi$ as
\begin{equation}
\label{eq:phiforultrastronglyqhalg}
\Phi = \{(i,j):\, 1 \leq i \leq n ,\, 1 \leq j \leq l_i \},
\end{equation}
for certain $n,l_i \in \Zp$, so that $[\Delta\B{k,l}:L_{i,j}]\neq 0$ implies that $k=i$ and $j\geq l$ (see \cite[§5]{MR3510398}). We shall always assume that the elements in $\Phi$ are labelled in such a way.

The following proposition summarises some properties of the RUSQ algebras.

\begin{prop}[{\cite[§5]{MR3510398}}]
\label{prop:newprop}
Let $\B{B,\Phi, \sqsubseteq }$ be a RUSQ algebra. The following hold:
\begin{enumerate}
\item $\mathcal{F}\B{\Delta}$ is closed under submodules;
\item $\Rad{\Delta\B{i,j}}=\Delta \B{i,j+1}$ for $j < l_i$, and $\Delta\B{i,l_i}=L_{i,l_i}$;
\item each $\Delta\B{i,j}$ is uniserial and has composition factors $L_{i,j}, \ldots, L_{i,l_i}$, ordered from the top to the socle;
\item $Q_{i,l_i} = T\B{i,1}$;
\item for $M\in \mathcal{F}\B{\Delta}$, the number of standard modules appearing in a $\Delta$-filtration of $M$ is given by $\sum_{i=1}^n [M:L_{i,l_i}]$;
\item a module $M$ belongs to $\mathcal{F}\B{\Delta}$ if and only if $\Soc{M}$ is a (finite) direct sum of modules of type $L_{i,l_i}$.
\end{enumerate}
\end{prop}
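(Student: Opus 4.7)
The plan is to establish the six properties in the order (1), then (2) and (3) together, then (5), (6), and finally (4), invoking (A1), (A2), and the labeling constraint as appropriate. Property (1) is immediate: as recalled in the paragraph preceding the proposition, condition (A1) is equivalent to $\mathcal{F}(\Delta)$ being closed under submodules, so (1) is simply a restatement.

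For (2) and (3) I would argue simultaneously by descending induction on $j$ with $i$ fixed. The base case $j = l_i$ is short: by (A1), $\Rad{\Delta(i, l_i)}$ lies in $\mathcal{F}(\Delta)$, but the labeling constraint forces all of its composition factors to be of the form $L_{i, j'}$ with $j' > l_i$, of which there are none; hence $\Rad{\Delta(i, l_i)} = 0$, and then $\Delta(i, l_i) = L_{i, l_i}$ follows from $[\Delta(i, l_i) : L_{i, l_i}] = 1$. For the inductive step I would combine two observations: first, the labeling forces every standard $\Delta(k, l)$ appearing in a $\Delta$-filtration of $\Rad{\Delta(i, j)}$ to satisfy $k = i$ and $l \geq j+1$; second, the multiplicity of $L_{i, j+1}$ in any $\Delta(i, l)$ with $l \geq j+1$ is $1$ if $l = j+1$ and $0$ otherwise. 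Comparing with $[\Rad{\Delta(i, j)} : L_{i, j+1}]$ then pins down the number of copies of $\Delta(i, j+1)$ in the filtration. To conclude $\Rad{\Delta(i, j)} = \Delta(i, j+1)$ (yielding both (2) and the uniseriality of (3)), I would argue that the top of $\Rad{\Delta(i, j)}$ is simple by analysing the projective cover $P_{i, j} \twoheadrightarrow \Delta(i, j)$ and invoking the inductive hypothesis on the $\Delta(i, l)$ with $l > j+1$.

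The remaining three properties fall out quickly. Property (5) is immediate from (3): each factor $\Delta(i, j)$ in a $\Delta$-filtration of $M$ contributes exactly one copy of $L_{i, l_i}$ to the composition series of $M$, so the total number of factors is $\sum_{i=1}^{n}[M : L_{i, l_i}]$. For the forward direction of (6), any simple submodule of $M \in \mathcal{F}(\Delta)$ lies in $\mathcal{F}(\Delta)$ by (1), hence equals $L_{i, l_i} = \Delta(i, l_i)$ for some $i$ by (2); conversely, if $\Soc{M}$ is a direct sum of such simples, then $M$ embeds into its injective envelope $\bigoplus Q_{i, l_i}$, which lies in $\mathcal{F}(\Delta)$ by (A2) together with $\Rad{\Delta(i, l_i)} = 0$ from (2), and then $M \in \mathcal{F}(\Delta)$ by (1). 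For (4), I would verify that $Q_{i, l_i}$ is indecomposable, lies in $\mathcal{F}(\Delta)$ by (A2), and receives an embedding $\Delta(i, 1) \hookrightarrow Q_{i, l_i}$ via the simple socle $L_{i, l_i}$ of $\Delta(i, 1)$ coming from (3); checking a $\nabla$-filtration and matching the highest composition factor then identifies $Q_{i, l_i}$ with $T(i, 1)$.

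The main obstacle is the inductive step of (2)--(3): showing that the top of $\Rad{\Delta(i, j)}$ is simple (equivalently, that only one copy of $\Delta(i, j+1)$ appears in its filtration), as opposed to merely that the composition multiplicities add up correctly. The labeling constraint controls the combinatorics of composition factors but not the module-theoretic structure, so bridging the gap requires a careful analysis of the projective cover and of how tops of $\Delta$-filtered submodules assemble --- this is the step that converts combinatorial bookkeeping into the uniserial description.
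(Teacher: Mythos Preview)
The paper does not prove this proposition: it is cited from \cite[§5]{MR3510398} as a background result, so there is no in-paper argument to compare against. Your proposal must therefore be judged on whether the hypotheses recorded in this paper --- conditions (A1), (A2), and the composition-factor constraint on the labelling --- actually suffice.

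Your arguments for (1), (5), and (6) are correct. Your sketch for (4) works once you spell out ``matching the highest composition factor'': if $Q_{i,l_i}=T(\mu)$, then $\Delta(\mu)$ embeds in $Q_{i,l_i}$ and so has simple socle $L_{i,l_i}$, forcing $\mu=(i,l)$ for some $l$ by (3); meanwhile the embedding $\Delta(i,1)\hookrightarrow Q_{i,l_i}$ makes $L_{i,1}$ a composition factor of $T(\mu)$, so $(i,1)\sqsubseteq\mu=(i,l)$; and since by (3) $L_{i,l}$ is a composition factor of $\Delta(i,1)$ we also have $(i,l)\sqsubseteq(i,1)$, whence $\mu=(i,1)$.

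The genuine gap is the one you flag yourself in (2)--(3). Your proposed fix, ``analysing the projective cover $P_{i,j}\twoheadrightarrow\Delta(i,j)$'', is not an argument, and I do not see how to turn it into one from the hypotheses available here. The composition-factor constraint controls \emph{which} simples appear in $\Delta(i,j)$ but says nothing about their multiplicities; nothing in your outline excludes $[\Delta(i,j):L_{i,j'}]>1$ for some $j'>j$, and hence nothing excludes $\Rad\Delta(i,j)$ having several standard factors rather than the single factor $\Delta(i,j+1)$. In \cite{MR3510398} the labelling is not an abstract datum from which (2)--(3) are then deduced; it is \emph{constructed} so that (2) holds by design --- the first index records an orbit of standard modules under the operation $\Delta\mapsto\Rad\Delta$ (which is well defined by (A1)), and the second index records the position along the resulting chain, so that $l_i$ is by definition the length of that chain. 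Your plan inverts cause and effect, and that inversion is exactly where the argument stalls.
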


\subsubsection{The ADR algebra}
Fix an Artin algebra $A$. Given a module $M$ in $\Mod{A}$, we shall denote its \emph{Loewy length} by $\LL{M}$. Let $A$ have Loewy length $L$ (as a left module). We want to study the basic version of the endomorphism algebra of $\bigoplus_{j=1}^L A/ \RAD{j}{A}$.

For this, let $\{P_1, \ldots, P_n\}$ be a complete set of projective indecomposable $A$-modules and let $l_i$ be the Loewy length of $P_i$. Define 
\[
G=G_A:=\bigoplus_{i=1}^{n} \bigoplus_{j=1}^{l_i} P_i/ \RAD{j}{P_i}.
\]

The \emph{Auslander--Dlab--Ringel algebra of $A$} (ADR algebra of $A$) is defined as
\[
R=R_A:=\End{A}{G}\op .
\]
The projective indecomposable $R$-modules are given by
\[
P_{i,j}:=\Hom{A}{G}{P_i/ \RAD{j}{P_i}},
\]
for $1 \leq i \leq n$, $1 \leq j \leq l_i$. 

Denote the simple quotient of $P_{i,j}$ by $L_{i,j}$ and define
\begin{equation}
\label{eq:posetadr}
\Lambda := \{ (i,j):\, 1 \leq i \leq n, \, 1 \leq j \leq l_i \},
\end{equation}
so that $\Lambda$ labels the simple $R$-modules. Define a partial order, $\unlhd$, on $\Lambda$ by
\begin{equation}
\label{eq:guentanaborrabo}
(i,j) \lhd (k,l) \Leftrightarrow j> l.
\end{equation}
According to \cite[§4]{MR3510398}, $(R, \Lambda, \unlhd)$ is a RUSQ algebra and the labelling $\Lambda$ is compatible with \eqref{eq:phiforultrastronglyqhalg}. 
\begin{thm}[{\cite[§3, §4]{MR3510398}}]
\label{prop:standard}
The ADR algebra $(R, \Lambda, \unlhd)$ is a RUSQ algebra. For every $(i,j)$ in $\Lambda$, we have $\Delta\B{i,j}\cong \RAD{j-1}{P_{i,1}}$ and there are short exact sequences
\begin{equation}
\label{eq:elreidsebastiao}
\begin{tikzcd}[ampersand replacement=\&]
0 \arrow{r} \& \Hom{A}{G}{\Rad{P_i}/ \RAD{j}{P_i}} \arrow{r} \& P_{i,j} \arrow{r} \& \Delta\B{i,j} \arrow{r} \& 0
\end{tikzcd}.
\end{equation}
\end{thm}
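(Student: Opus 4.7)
The theorem bundles three assertions: that $(R,\Lambda,\unlhd)$ is RUSQ, that $\Delta(i,j)\cong\RAD{j-1}{P_{i,1}}$, and the displayed short exact sequence. The plan is to prove them in this order, with the third flowing easily from the first two.

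For the RUSQ claim, the strategy is to translate structural questions about $R$-modules into $A$-module questions via the Auslander-type isomorphism
\[
\Hom{R}{\Hom{A}{G}{M}}{\Hom{A}{G}{N}} \cong \Hom{A}{M}{N} \qquad (M \in \Add{G}).
\]
This makes the multiplicities $[P_{i,j}:L_{k,l}]$ and the hom spaces between projective $R$-modules computable directly from the explicit description of $G$ in $\Mod{A}$. From these computations one would verify the three quasihereditary axioms against the poset $(\Lambda,\unlhd)$ and then the two RUSQ conditions (A1) and (A2). The main obstacles sit here: (A1) requires an explicit handle on $\Delta(i,j)$ from which $\Rad{\Delta(i,j)}$ can be shown to lie in $\mathcal{F}(\Delta)$, while (A2) requires identifying the projective $P_{i,l_i}=\Hom{A}{G}{P_i}$ with the injective envelope $Q_{i,l_i}$ and verifying it belongs to $\mathcal{F}(\Delta)$.

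Once RUSQ is in hand, Proposition \ref{prop:newprop} tells me that standard modules are uniserial with $\Rad{\Delta(i,j)}=\Delta(i,j+1)$. I would first show $\Delta(i,1)=P_{i,1}$: since $(i,1)$ is $\unlhd$-maximal, $\Delta(i,1)$ is the largest quotient of $P_{i,1}$ without composition factors $L_{k,1}$ for $k\neq i$; but the Auslander isomorphism gives $\Hom{R}{P_{k,1}}{P_{i,1}} \cong \Hom{A}{L_k}{L_i}=0$ for $k\neq i$, excluding such factors, and hence $\Delta(i,1)=P_{i,1}$. An induction on $j$ via $\Rad{\Delta(i,j)}=\Delta(i,j+1)$ then yields $\Delta(i,j)=\RAD{j-1}{P_{i,1}}$.

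For the short exact sequence, I apply the left exact functor $\Hom{A}{G}{-}$ to the $A$-module sequence
\[
0 \to \Rad{P_i}/\RAD{j}{P_i} \to P_i/\RAD{j}{P_i} \to L_i \to 0
\]
and obtain the exact sequence $0 \to \Hom{A}{G}{\Rad{P_i}/\RAD{j}{P_i}} \to P_{i,j} \to P_{i,1}$. It remains to identify the image of the rightmost map with $\RAD{j-1}{P_{i,1}}=\Delta(i,j)$. The image is a nonzero quotient of the indecomposable projective $P_{i,j}$, hence has simple top $L_{i,j}$; but $P_{i,1}=\Delta(i,1)$ is uniserial with composition factors $L_{i,1},\ldots,L_{i,l_i}$ from top to socle, so the unique submodule with top $L_{i,j}$ is $\RAD{j-1}{P_{i,1}}$, forcing the identification.
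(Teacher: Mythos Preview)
The paper does not prove this theorem: it is stated in the preliminaries and attributed entirely to \cite[\S3, \S4]{MR3510398}. There is therefore no proof in the present paper to compare against; the result is imported wholesale.

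That said, your outline has a structural problem worth flagging. You propose to establish RUSQ first and only afterwards use Proposition~\ref{prop:newprop} (which needs RUSQ and the labelling~\eqref{eq:phiforultrastronglyqhalg}) to pin down $\Delta(i,j)\cong\RAD{j-1}{P_{i,1}}$. But in your own words, verifying (A1) ``requires an explicit handle on $\Delta(i,j)$'', and you never say how you obtain that handle before the explicit description is available. As written, the argument is circular: the RUSQ step depends on knowledge of $\Delta(i,j)$ that you only derive from RUSQ.

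The route taken in \cite{MR3510398}, and the natural one, runs in the opposite order. One first identifies, by a direct $\Hom$-computation using the Auslander isomorphism you quote, the trace $\Tr{\bigoplus_{(k,l)\not\unlhd(i,j)}P_{k,l}}{P_{i,j}}$ with the submodule $\Hom{A}{G}{\Rad P_i/\RAD{j}{P_i}}$ of $P_{i,j}$; this \emph{is} the short exact sequence~\eqref{eq:elreidsebastiao} and simultaneously gives the concrete description of $\Delta(i,j)$. With the standard modules in hand one then checks the quasihereditary axioms and (A1)--(A2) directly. Your arguments for $\Delta(i,1)=P_{i,1}$ and for the image identification in the last paragraph are fine and would slot into this reordered proof, but the overall logical flow needs to be reversed.
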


\section{\texorpdfstring{$\Delta$}{[Delta]}-semisimple modules and \texorpdfstring{$\Delta$}{[Delta]}-semisimple filtrations}
\label{sec:deltass}
For a quasihereditary algebra $\B{B, \Phi, \sqsubseteq}$, we say that a $B$-module is \emph{$\Delta$-semisimple} if it is a direct sum of standard modules. Every module $M$ in $\mathcal{F}\B{\Delta}$ has some submodule $N$ such that:
\begin{description}
\item[(B)\label{item:b}] $N$ is $\Delta$-semisimple and $M/N$ is in $\mathcal{F}\B{\Delta}$.
\end{description}
Given a module $M$ in $\mathcal{F}\B{\Delta}$, we may consider the submodules of $M$ which are maximal with respect to property \ref{item:b}. The module $M$ may have more than one such submodule (see Example $2.20$ in \cite{MR2642018}). However, according to \cite{MR2642018}, the submodules of $M$ which are maximal with respect to \ref{item:b} are unique up to isomorphism.

Suppose now that $B$ is a RUSQ algebra. The $\Delta$-sem\-i\-sim\-ple modules over RUSQ algebras are particularly well behaved. As we will see in Corollary \ref{cor:ultrastronglyqhdeltass}, the property of being $\Delta$-semisimple is closed under submodules in this case. Furthermore, every module $M$ in $\mathcal{F}\B{\Delta}$ has exactly one submodule $D_M$ which is maximal with respect to property \ref{item:b}. The module $D_M$ is actually the unique maximal $\Delta$-semisimple submodule of $M$ (with respect to inclusion). Moreover, $D_M$ will be obtained by applying a certain hereditary preradical (as in Definition \ref{defi:hereditary}) to the module $M$. Since $M/D_M$ still lies in $\mathcal{F}\B{\Delta}$, we may proceed iteratively and define the $\Delta$-semisimple filtration (which will be unique) and the $\Delta$-semisimple length of any module in $\mathcal{F}\B{\Delta}$. 

\subsection{\texorpdfstring{$\Delta$}{[Delta]}-semisimple modules}
\label{subsec:moti}

We are interested in submodules of $\Delta$-good modules which are maximal with respect to property \ref{item:b}. As a consequence of Theorem $2.17$ in \cite{MR2642018}, these are unique up to isomorphism.
\begin{thm}[{\cite[Theorem $2.17$]{MR2642018}}]
Let $\B{B, \Phi , \sqsubseteq}$ be a quasihereditary algebra, and let $M$ be in $\mathcal{F}\B{\Delta}$. Any two submodules of $M$ which are maximal with respect to property \ref{item:b} are isomorphic.
\end{thm}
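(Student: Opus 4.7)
The plan is to prove the stronger statement that the $\Delta$-filtration multiplicities $\B{N: \Delta\B{i}}$, $i \in \Phi$, are determined by $M$ alone, for every submodule $N \subseteq M$ maximal with respect to property \ref{item:b}; since such $N$ is a direct sum of standard modules, the agreement of these multiplicities is equivalent to the stated isomorphism. From the short exact sequence $0 \to N \to M \to M/N \to 0$ and the additivity of $\Delta$-multiplicities on $\mathcal{F}\B{\Delta}$, one has the identity $\B{M: \Delta\B{i}}=\B{N: \Delta\B{i}}+\B{M/N : \Delta\B{i}}$, so it suffices to show that the multiplicities of $M/N$ do not depend on the choice of maximal $N$.

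The argument proceeds by induction on the $\Delta$-length $\ell\B{M}:= \sum_i \B{M : \Delta\B{i}}$, the base case $\ell\B{M}=0$ being trivial. For the inductive step, pick $\mu \in \Phi$ minimal among the labels with $\B{M: \Delta\B{\mu}}\neq 0$. Since the composition factors of $\Delta\B{\mu}$ have labels $\sqsubseteq \mu$, minimality forces $\Delta\B{\mu}=L_\mu$, and the standard quasihereditary vanishing yields $\Ext{B}{1}{\Delta\B{j}}{\Delta\B{\mu}}=0$ for every $j$. Consider the isotypic socle component $S:=\Tr{L_\mu}{M}$, which is itself $\Delta$-semisimple. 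The inductive step rests on showing $S \subseteq N$ for every maximal $N$: the Ext-vanishing implies that the extension $0 \to N \to N+S \to S/\B{N\cap S} \to 0$ splits, so $N+S$ is again $\Delta$-semisimple; one then checks that $M/\B{N+S}$ still lies in $\mathcal{F}\B{\Delta}$, so $N+S$ satisfies property \ref{item:b} and the maximality of $N$ forces $N+S=N$. Having obtained $S\subseteq N_1 \cap N_2$, one passes to $M/S$, of strictly smaller $\Delta$-length, and applies the induction hypothesis.

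The main obstacle is the verification that $M/\B{N+S}$ remains in $\mathcal{F}\B{\Delta}$: a socle-like quotient of a $\Delta$-filtered module need not be $\Delta$-filtered in a general quasihereditary algebra, so establishing this requires a careful analysis of the placement of $\Delta\B{\mu}$-summands inside the $\Delta$-filtration of $M$, together with the behaviour of $S$ relative to the $L_\mu$-socle of the $\Delta\B{j}$-summands appearing in $N$. Rather than reproving these subtleties in full, the argument appeals directly to \cite[Theorem $2.17$]{MR2642018}, which handles this delicate bookkeeping for arbitrary quasihereditary algebras.
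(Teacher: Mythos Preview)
The paper does not supply its own proof of this statement: it is quoted verbatim as \cite[Theorem~2.17]{MR2642018} and used as a black box, with no argument given. There is therefore no in-paper proof to compare your proposal against.

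That said, your proposal is not a proof. Its final paragraph concedes that the crucial step --- showing that $M/(N+S)$ remains in $\mathcal{F}(\Delta)$ --- is settled by appealing directly to \cite[Theorem~2.17]{MR2642018}. That is precisely the theorem under discussion, so the argument is circular: you have outlined a reduction whose hard part is deferred back to the result you are meant to establish.

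There is also a genuine error earlier in the outline. You pick $\mu$ minimal among the labels $i$ with $(M:\Delta(i))\neq 0$ and then assert that ``minimality forces $\Delta(\mu)=L_\mu$''. This does not follow: $\Delta(\mu)$ may have composition factors $L_j$ with $j\sqsubset\mu$, and the presence of $L_j$ as a composition factor of $M$ in no way implies $(M:\Delta(j))\neq 0$. Minimality in the $\Delta$-support of $M$ is much weaker than minimality in the full poset $\Phi$. Without $\Delta(\mu)=L_\mu$, the subsequent analysis of $S=\Tr{L_\mu}{M}$ and its relation to $N$ loses its footing. (The Ext-vanishing claim ``$\Ext{B}{1}{\Delta(j)}{\Delta(\mu)}=0$ for every $j$'' is likewise too strong as stated; at best one gets vanishing for those $j$ actually occurring in the $\Delta$-support of $M$.)
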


Note that $\Delta$-semisimple modules are in general well behaved with respect to quotients in the following way: every $\Delta$-good factor module of a $\Delta$-semisimple module is still $\Delta$-semisimple. This assertion follows from the fact that $\mathcal{F}\B{\Delta}$ is closed under taking kernels of epimorphisms (\cite[Lemma $1.5$]{MR1211481}) and from Theorem $3.2$ in \cite{MR2642018}.

We wish to study the $\Delta$-semisimple modules over a RUSQ algebra $\B{B, \Phi, \sqsubseteq}$. We shall assume that the set $\Phi$ is as described in \eqref{eq:phiforultrastronglyqhalg}. In this subsection we prove some key properties of the $\Delta$-semisimple modules over RUSQ algebras. Namely, we show that the property of being $\Delta$-semisimple is closed under taking submodules.

\begin{lem}
\label{lem:ultrastronglyses}
Let $\B{B, \Phi, \sqsubseteq}$ be a RUSQ algebra. Let $M$ be in $\Mod{B}$ and suppose that there is a short exact sequence
\begin{equation}
\label{eq:christophlisboa}
\begin{tikzcd}[ampersand replacement=\&]
0 \arrow{r} \& \Delta\B{k,l} \arrow{r}{f} \& M  \arrow{r}\& \Delta \B{i,j} \arrow{r} \& 0
\end{tikzcd},
\end{equation}
with $(k,l), (i,j) \in \Phi$. If $\Soc{M}\not\cong \Soc{\Delta\B{k,l}}$, then \eqref{eq:christophlisboa} splits.
\end{lem}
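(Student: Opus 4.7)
The plan is to use the hypothesis on $\Soc{M}$ to identify a simple summand of $\Soc{M}$ complementing $f\B{\Soc{\Delta\B{k,l}}}$, and then to construct a section of the surjection $\pi \colon M \to \Delta\B{i,j}$ by induction on the length of $\Delta\B{i,j}$. Since $\mathcal{F}\B{\Delta}$ is closed under extensions, $M$ lies in $\mathcal{F}\B{\Delta}$, so Proposition \ref{prop:newprop}(6) implies that $\Soc{M}$ is a direct sum of simples of the form $L_{s,l_s}$. The embedding $f$ places $\sigma := f\B{\Soc{\Delta\B{k,l}}} \cong L_{k,l_k}$ as a direct summand of $\Soc{M}$; write $\Soc{M} = \sigma \oplus T$, so that the hypothesis $\Soc{M} \not\cong L_{k,l_k}$ forces $T \neq 0$. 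Because $\sigma$ is essential in $f\B{\Delta\B{k,l}}$ and $T \cap \sigma = 0$, we also have $T \cap f\B{\Delta\B{k,l}} = 0$; hence $\pi|_T$ embeds $T$ into $\Delta\B{i,j}$ with nonzero semisimple image. Since $\Delta\B{i,j}$ is uniserial (Proposition \ref{prop:newprop}(3)), its only nonzero semisimple submodule is $\Soc{\Delta\B{i,j}} = L_{i,l_i}$, so the image equals $L_{i,l_i}$ and $T \cong L_{i,l_i}$.

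I would then induct on the length $r := l_i - j + 1$ of $\Delta\B{i,j}$. The base case $r = 1$ is immediate: $\Delta\B{i,j} = L_{i,l_i}$ and $(\pi|_T)^{-1} \colon L_{i,l_i} \to T \subseteq M$ is a section of $\pi$. For $r > 1$, set $M_0 := \pi^{-1}\B{\Delta\B{i,j+1}}$, using $\Rad{\Delta\B{i,j}} = \Delta\B{i,j+1}$ from Proposition \ref{prop:newprop}(2). Since $\pi\B{\Soc{M}} \subseteq \Soc{\Delta\B{i,j}} = L_{i,l_i} \subseteq \Delta\B{i,j+1}$, we have $\Soc{M} \subseteq M_0$ and hence $\Soc{M_0} = \Soc{M} \not\cong L_{k,l_k}$. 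The inductive hypothesis applied to the sequence $0 \to \Delta\B{k,l} \to M_0 \to \Delta\B{i,j+1} \to 0$ yields $M_0 = f\B{\Delta\B{k,l}} \oplus N_0$ with $N_0 \cong \Delta\B{i,j+1}$.

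The main obstacle is to lift this splitting from $M_0$ to $M$, that is, to extend $N_0$ to a complement $N \supseteq N_0$ of $f\B{\Delta\B{k,l}}$ in $M$ that is isomorphic to $\Delta\B{i,j}$. My plan is to pick a lift $m \in M$ of a top generator of $\Delta\B{i,j}$ and to adjust $m$ by elements of $N_0$ and of $f\B{\Delta\B{k,l}}$ until $Bm \cap f\B{\Delta\B{k,l}} = 0$, whereupon $N := Bm + N_0$ is the required complement. Equivalently, one needs the class of the original sequence in $\Ext{B}{1}{\Delta\B{i,j}}{\Delta\B{k,l}}$ to vanish under the socle hypothesis. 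When $k \neq i$, the disjointness of composition-factor types between $\Delta\B{k,l}$ and $\Delta\B{i,j}$ together with the injective-hull decomposition $Q_{k,l_k} \oplus Q_{i,l_i}$ of the envelope of $\Soc{M}$ makes this step transparent: one extends the projection $M_0 \to f\B{\Delta\B{k,l}} \hookrightarrow Q_{k,l_k}$ to $h \colon M \to Q_{k,l_k}$ by injectivity of $Q_{k,l_k}$ and takes $N := \ker h$. When $k = i$, the two summands of $Q_{k,l_k}^{\oplus 2}$ are no longer distinguishable by composition-factor type, and a separate analysis exploiting the uniserial structure of the family $\{\Delta\B{i,s}\}_s$ is required.
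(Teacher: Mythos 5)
Your preliminary work is sound: the identification of a simple complement $T\cong L_{i,l_i}$ of $f\B{\Soc{\Delta\B{k,l}}}$ in $\Soc{M}$, the reduction to $M_0=\pi^{-1}\B{\Delta\B{i,j+1}}$, and the check that $\Soc{M_0}=\Soc{M}$ so the hypothesis passes down are all correct. But the proposal has a genuine gap, which you yourself flag: the lifting step from $M_0$ to $M$ is not carried out, and the case $k=i$ is explicitly left open. Even the sketch for $k\neq i$ is incomplete — having extended the projection $M_0\to f\B{\Delta\B{k,l}}\hookrightarrow Q_{k,l_k}$ to $h\colon M\to Q_{k,l_k}$ and set $N:=\Ker{h}$, you still must prove $N\oplus\Ima{f}=M$ (e.g.\ by a $\Delta$-multiplicity or length count), which is not done.

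The paper's proof shows that the entire induction is unnecessary and that a single argument covers both cases. It applies the injectivity trick once, directly to $M$: pick any nonzero submodule $M'$ with $M'\cap\Ima{f}=0$ (it exists because $\Soc{M}\not\cong L_{k,l_k}$), define $g\colon\Ima{f}\oplus M'\to Q_{k,l_k}$ by embedding $\Ima{f}$ and killing $M'$, and extend to $g'\colon M\to Q_{k,l_k}$. Since $Q_{k,l_k}=T\B{k,1}\in\mathcal{F}\B{\Delta}$ and $\mathcal{F}\B{\Delta}$ is closed under submodules, both $\Ima{g'}$ and $\Ker{g'}$ lie in $\mathcal{F}\B{\Delta}$; they are nonzero (as $g'|_{\Ima{f}}$ is monic and $M'\subseteq\Ker{g'}$), so counting $\Delta$-factors in $0\to\Ker{g'}\to M\to\Ima{g'}\to 0$ forces each to be a single standard module, and by multiplicity $\{\Ker{g'},\Ima{g'}\}=\{\Delta\B{k,l},\Delta\B{i,j}\}$ up to isomorphism. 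If $\Ker{g'}\cong\Delta\B{i,j}$, then $\Ima{f}\oplus\Ker{g'}=M$ by a length count and the sequence splits. If instead $\Ima{g'}\cong\Delta\B{i,j}$, then $\Delta\B{k,l}\cong\Ima{g}\subseteq\Ima{g'}$ gives an embedding $\Delta\B{k,l}\hookrightarrow\Delta\B{i,j}$, whence $(k,l)\sqsubseteq(i,j)$, and the standard Ext-vanishing for quasihereditary algebras ($\Ext{B}{1}{\Delta\B{\lambda}}{\Delta\B{\mu}}\neq 0\Rightarrow\mu\sqsupset\lambda$, cf.\ \cite[Lemma 1.3(b)]{MR1211481}) gives $\Ext{B}{1}{\Delta\B{i,j}}{\Delta\B{k,l}}=0$, so the sequence splits. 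This $\Delta$-count plus Ext-vanishing is exactly the ingredient missing from your lifting step, and it disposes of $k=i$ and $k\neq i$ at the same time.
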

\begin{proof}
Note that $\Soc{\B{\Ima{f}}}= \Soc{\Delta\B{k,l}}= L_{k,l_k}$ (see Proposition \ref{prop:newprop}, part 3). As $\Soc{M}\not\cong \Soc{\Delta\B{k,l}}$, there is some nonzero submodule $M'$ of $M$ such that $\Ima{f} \cap M'=0$. Let $g: \Ima{f} \oplus M' \longrightarrow Q_{k,l_k}$ be the morphism which embeds $\Ima{f}$ in $Q_{k,l_k}$ and maps $M'$ to zero. By the injectivity of $Q_{k,l_k}$, $g$ extends to a map $g': M \longrightarrow Q_{k,l_k}$. Note that $\Ima{f}\cap \Ker{g'} =0$ as $g'|_{\Ima{f}}=g|_{\Ima{f}}$ is an injective map. Thus $\Ima{f}\oplus\Ker{g'}$ is a submodule of $M$.

By part 4 of Proposition \ref{prop:newprop}, $Q_{k,l_k}$ is in $\mathcal{F}(\Delta)$. Since $\mathcal{F}\B{\Delta}$ is closed under taking submodules, then both $\Ima{g'}$ and $\Ker{g'}$ lie in $\mathcal{F}\B{\Delta}$. Denote by $\Delta |N|$ the number of standard modules appearing in a $\Delta$-filtration of $N\in \mathcal{F}\B{\Delta}$. As $\Delta |M|=2$, $\Delta |\Ima{g'}|>0$ and $\Delta |\Ker{g'}| > 0$, it follows that $\Delta |\Ima{g'}|=\Delta |\Ker{g'}|=1$. So either $\Ima{g'}\cong \Delta(k,l)$ and $\Ker{g'}\cong \Delta(i,j)$, or $\Ima{g'}\cong \Delta(i,j)$ and $\Ker{g'}\cong \Delta(k,l)$.

If $\Ker{g'}\cong \Delta(i,j)$, then the submodule $\Ima{f} \oplus \Ker{g'}$ of $M$ must coincide with $M$ (as both modules have the same Jordan--H\"{o}lder length). In this case the monic $f$ splits. 

If $\Ima{g'}\cong \Delta(i,j)$, then $(k,l) \sqsubseteq (i,j) $ as
\[\Delta\B{k,l}\cong \Ima{f} \cong \Ima{g} \subseteq \Ima{g'}.\]
But then part $(b)$ of Lemma $1.3$ in \cite{MR1211481} implies that \eqref{eq:christophlisboa} is a split exact sequence.
\end{proof}

We now use the previous result to give a characterisation of the $\Delta$-semisimple modules over a RUSQ algebra.
\begin{cor}
\label{cor:ultrastronglyqhdeltass}
Let $\B{B, \Phi, \sqsubseteq}$ be a RUSQ algebra and let $M$ be in $\mathcal{F}\B{\Delta}$. Then $M$ is $\Delta$-semisimple if and only if the number of simple summands of $\Soc{M}$ coincides with the number of factors in a $\Delta$-filtration of $M$. Moreover, any submodule of a $\Delta$-semisimple module is still $\Delta$-semisimple.
\end{cor}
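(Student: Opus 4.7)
The forward direction is immediate: if $M = \bigoplus_t \Delta(i_t, j_t)$, then Proposition \ref{prop:newprop}(3) gives $\Soc{M} = \bigoplus_t L_{i_t, l_{i_t}}$, which has exactly $\Delta|M|$ simple summands.

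For the reverse direction I proceed by induction on $n = \Delta|M|$, the cases $n \leq 1$ being trivial. For $n \geq 2$, pick $M_1 \cong \Delta(k, l) \subseteq M$ with $M/M_1 \in \mathcal{F}\B{\Delta}$ (the first step of a $\Delta$-filtration). Applying $\Soc{(-)}$ to $0 \to M_1 \to M \to M/M_1 \to 0$, together with the general bound $|\Soc{X}| \leq \Delta|X|$ for $X \in \mathcal{F}\B{\Delta}$ (itself proven by the same left-exactness argument), forces $|\Soc{M/M_1}| = n-1 = \Delta|M/M_1|$, so by induction $M/M_1 = \bigoplus_s \Delta(a_s, b_s)$. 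For each $s$ the preimage $P_s \subseteq M$ of $\Delta(a_s, b_s)$ has $\Delta$-length $2$; lifting a generator of $L_{a_s, l_{a_s}} \subseteq \Soc{M/M_1}$ back to an element $x_s \in \Soc{M} \cap P_s$ shows $|\Soc{P_s}| = 2$, and Lemma \ref{lem:ultrastronglyses} then splits $0 \to M_1 \to P_s \to \Delta(a_s, b_s) \to 0$, yielding lifts $\Delta(a_s, b_s)' \subseteq P_s$. Setting $N := \sum_s \Delta(a_s, b_s)'$ and checking that $M_1 \cap N = 0$ and $\pi(N) = M/M_1$ (using the direct-sum structure of $M/M_1$) gives $M = M_1 \oplus N$, a $\Delta$-semisimple decomposition.

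For the moreover statement I induct on the length $|M|$. If $N = M$ there is nothing to prove; otherwise $M/N \neq 0$ has a simple top $L \cong L_{i^*, j^*}$, inducing $f \colon M \twoheadrightarrow L$ with $N \subseteq \Ker{f} =: M^*$ and $|M^*| = |M| - 1$. It suffices to prove $M^*$ is $\Delta$-semisimple, for then the inductive hypothesis applied to $N \subseteq M^*$ finishes the argument. Writing $M = \bigoplus_s \Delta(k_s, l_s)$, the map $f$ factors through $\Top{M}$ and is nonzero only on the isotypic block formed by the summands with $(k_s, l_s) = (i^*, j^*)$. Since $\Delta(i^*, j^*)$ is uniserial by Proposition \ref{prop:newprop}(3), $\End{B}{\Delta(i^*, j^*)}$ is local with residue $D = \End{B}{L}$; the standard surjection $GL_p(\End{B}{\Delta(i^*, j^*)}) \twoheadrightarrow GL_p(D)$ then lets me pick an automorphism of $M$ transforming $f$ into the top map of one distinguished summand $\Delta(i^*, j^*)$. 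In this new basis $M^*$ is obtained from $M$ by replacing that summand with $\Rad{\Delta(i^*, j^*)} = \Delta(i^*, j^*+1)$ (Proposition \ref{prop:newprop}(2); delete the summand if $j^* = l_{i^*}$), and is therefore $\Delta$-semisimple.

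The main obstacle is the change-of-basis argument in the moreover part: one must confirm that an invertible $D$-linear endomorphism of the $L$-isotypic block of $\Top{M}$ lifts to a module automorphism of $M$, which reduces to the standard surjectivity of $GL_p$ over a local ring onto $GL_p$ of its residue. The rest of the proof consists of routine applications of Lemma \ref{lem:ultrastronglyses}, left-exactness of $\Soc{(-)}$, and the structural properties listed in Proposition \ref{prop:newprop}.
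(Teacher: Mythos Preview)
Your argument is correct. The forward direction and the inductive proof of the reverse direction are essentially the paper's argument: both pick a standard submodule $M_1$, use induction to decompose $M/M_1$, and then invoke Lemma~\ref{lem:ultrastronglyses} to split each two-term preimage. The only difference is bookkeeping: the paper packages the numerical hypothesis as a property $\mathcal{P}(M)$ (``all composition factors of type $L_{x,l_x}$ lie in $\Soc{M}$'') and verifies that $\mathcal{P}$ passes to the relevant submodules and quotients, whereas you re-derive the needed socle counts each time via left-exactness of $\Soc{(-)}$ and the bound $|\Soc{X}|\le \Delta|X|$.

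For the ``moreover'' clause your route is genuinely different and noticeably heavier. The paper observes that the equivalence just proved, combined with Proposition~\ref{prop:newprop} parts (5) and (6), rephrases $\Delta$-semisimplicity of $M\in\mathcal{F}(\Delta)$ as: $M/\Soc{M}$ has no composition factor of type $L_{x,l_x}$. Since $N/\Soc{N}\hookrightarrow M/\Soc{M}$ for any submodule $N$ (hereditary property of $\Soc$), this is immediately inherited by submodules, and the moreover clause follows in one line. Your proof instead descends one composition factor at a time and, at each step, performs a change-of-basis over the local ring $\End{B}{\Delta(i^*,j^*)}$ to align $\Ker f$ with a standard decomposition. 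This is correct (the surjection $GL_p(E)\twoheadrightarrow GL_p(D)$ for $E$ local with residue $D$ is the standard lifting of units and elementary matrices), but it is a fair amount of work for something the paper gets for free from the characterisation. One small imprecision: you write that $M/N$ ``has a simple top $L$''; you only need, and only use, that $M/N$ has some simple \emph{quotient} $L$.
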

\begin{proof}
Let $M$ be in $\mathcal{F}\B{\Delta}$. Denote by $\mathcal{P}(M)$ the following assertion: ``the number of simple summands of $\Soc{M}$ coincides with the number of factors in a $\Delta$-filtration of $M$". By parts 5 and 6 of Proposition \ref{prop:newprop}, $\mathcal{P}(M)$ is true if and only if the composition factors of $M$ of type $L_{x,l_x}$ are exactly the summands of its socle. From this equivalence, it is easy to see that the truth of $\mathcal{P}(M)$ implies the truth of $\mathcal{P}(N)$ for $N\subseteq M$. Let $N \in \mathcal{F}(\Delta)$ be a submodule of $M$ such that $M/N \in \mathcal{F}\B{\Delta}$. Using Proposition \ref{prop:newprop}, we also conclude that the truth of $\mathcal{P}(M)$ implies the truth of $\mathcal{P}(M/N)$.

If $M$ is a $\Delta$-semisimple module then $\mathcal{P}(M)$ is clearly true. Suppose now that $\mathcal{P}(M)$ holds for $M\in \mathcal{F}\B{\Delta}$. We wish to show that $M$ is $\Delta$-semisimple. We prove this by induction on the number $z$ of factors in a $\Delta$-filtration of $M$. If $z=1$ the result is obvious. Suppose now that $z \geq 2$. Let $M_1$ be a submodule $M$ satisfying $M/M_1\in\mathcal{F}\B{\Delta}$ and $M_1\cong \Delta (i,j)$ for some $(i,j) \in \Phi$. By the remark in the first paragraph, $\mathcal{P}(M/M_1)$ holds. Using induction, we conclude that $M/M_1$ is $\Delta$-semisimple. Therefore $M/M_1=\bigoplus_{i=1}^{z-1} N_i/M_1$, where each $N_i/M_1$ is isomorphic to a standard module. Applying again the observations in the first paragraph, we deduce that assertion $\mathcal{P}(N_i)$ must hold, so, by induction, each $N_i$ is a $\Delta$-semisimple module. Using that both $M_1$ and $N_i/M_1 $ are standard modules, we conclude that $N_i= M_1 \oplus M_{i+1}$, where $M_{i+1}$ is a submodule of $M$ isomorphic to a standard module. Note that $\bigoplus_{i=1}^z M_i$ is a submodule of $M$ which has the same Jordan--H\"{o}lder length as $M$. This implies that $M=\bigoplus_{i=1}^z M_i$, which proves that $M$ is $\Delta$-semisimple. We have just shown that $M$ is a $\Delta$-semisimple module if and only if assertion $\mathcal{P}(M)$ holds.

Let now $N$ be a submodule of a $\Delta$-semisimple module $M$. Then $\mathcal{P}(M)$ is true, which implies that $\mathcal{P}(N)$ holds. Therefore $N$ is a $\Delta$-semisimple module.
\end{proof}
In the next subsection we are going to show that every $\Delta$-good module over a RUSQ algebra has a unique maximal $\Delta$-semi\-sim\-ple submodule. First, we check that arbitrary quasihereditary algebras do not possess this property.
\begin{ex}
\label{ex:karinparker}
Consider the quiver
\[
Q=
\begin{tikzcd}[ampersand replacement=\&]
0 \& 1 \arrow[bend left]{l}{\alpha} \arrow[bend right]{l}[swap]{\varepsilon} \\
3 \arrow{u}{\delta}\arrow[bend left]{r}{\gamma_1}\& 2 \arrow{u}{\beta} \arrow[bend left]{l}{\gamma_0}
\end{tikzcd},
\]
and the bound quiver algebra $B=KQ/ I$, where $I$ is the ideal generated by the elements $\varepsilon \beta - \delta \gamma_0$ and $\gamma_0 \gamma_1$. The algebra $B$ is quasihereditary with respect to the labelling poset $0 < 1 < 2 < 3$. The modules
\[
\begin{tikzcd}[ampersand replacement=\&, row sep =tiny, column sep = tiny]
0
\end{tikzcd}, \, \,
\begin{tikzcd}[ampersand replacement=\&, row sep =tiny, column sep = tiny]
\& 1 \arrow[dash]{dl}[swap]{\varepsilon} \arrow[dash]{dr}{\alpha} \& \\
0 \& \& 0
\end{tikzcd} , \, \,
\begin{tikzcd}[ampersand replacement=\&, row sep =tiny, column sep = tiny]
2 \arrow[dash]{d}
\\ 1 \arrow[dash]{d}{\alpha} \\
0
\end{tikzcd}, \,\,
\begin{tikzcd}[ampersand replacement=\&, row sep =tiny, column sep = tiny]
\& 3 \arrow[dash]{dl} \arrow[dash]{dr} \& \\
0 \& \& 2 \arrow[dash]{d} \\
 \& \& 1\arrow[dash]{d}{\alpha} \\
 \&\& 0
\end{tikzcd}.
\]
are the corresponding standard $B$-modules. The projective cover $P_2$ of the simple module with label 2 has the following structure
\[
\begin{tikzcd}[ampersand replacement=\&, row sep =tiny, column sep = tiny]
\& \& 2 \arrow[dash]{dl} \arrow[dash]{dr} \& \&\\
\& 3 \arrow[dash]{dl} \arrow[dash]{dr} \& \& 1 \arrow[dash]{dl}[swap]{\varepsilon} \arrow[dash]{dr}{\alpha}\& \\
2\arrow[dash]{d} \& \& 0\& \& 0  \\
1 \arrow[dash]{d}{\alpha} \&\&\&\& \\ 
0 \&\&\&\& 
\end{tikzcd}.\]
The modules $\Delta\B{1} \oplus \Delta\B{2}$ and $\Delta\B{3} \oplus \Delta\B{0}$ are both maximal $\Delta$-semisimple submodules of $P_2$. The quotient of $P_2$ by each of these submodules does not belong to $\mathcal{F}\B{\Delta}$, i.e.~none of these submodules of $P_2$ satisfies property \ref{item:b}.
\end{ex}

\subsection{The preradical \texorpdfstring{$\delta$}{[delta]} and \texorpdfstring{$\Delta$}{[Delta]}-semisimple filtrations}
\label{subsec:thepreradicaldelta}

Let $\B{B ,\Phi, \sqsubseteq}$ be an arbitrary quasihereditary algebra. As pointed out in the previous subsection, the submodules of a module $M$ in $\mathcal{F}\B{\Delta}$ which are maximal with respect to property \ref{item:b} are all isomorphic, but they are not necessarily unique. We have also seen that a module $M$ in $\mathcal{F}\B{\Delta}$ may have more than one maximal $\Delta$-semisimple submodule with respect to inclusion (Example \ref{ex:karinparker}). We shall prove that both these maximal submodules are unique and actually coincide when the underlying algebra is a RUSQ algebra. For this, we use the general theory of preradicals introduced in Subsection \ref{sec:preradicals}.
\begin{lem}
\label{lem:Deltahereditary}
Let $(B,\Phi, \sqsubseteq)$ be a RUSQ algebra. The corresponding set $\Delta$ of standard $B$-modules is a hereditary class in $\Mod{B}$. In particular, $\Tr{\Delta}{-}$ is a hereditary preradical in $\Mod{B}$.
\end{lem}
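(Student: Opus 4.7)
My plan is to deduce this almost entirely from Corollary \ref{cor:ultrastronglyqhdeltass} together with the description of the standard modules over a RUSQ algebra given in Proposition \ref{prop:newprop}. To verify the definition of a hereditary class (Definition \ref{defi:hereditaryclass}), I need to show that every submodule of a module in $\Add{\Delta}$ is generated by $\Delta$, and then the claim that $\Tr{\Delta}{-}$ is a hereditary preradical is immediate from Lemma \ref{lem:hereditaryclass}.

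The first step is to identify $\Add{\Delta}$ with the class of $\Delta$-semisimple modules. By part 3 of Proposition \ref{prop:newprop}, every standard module $\Delta(i,j)$ is uniserial, hence indecomposable and with local endomorphism ring. Since $\Phi$ is finite, $\Add{\Delta}$ is Krull--Schmidt with indecomposables precisely the $\Delta(i,j)$, so a module lies in $\Add{\Delta}$ if and only if it is a finite direct sum of standard modules, i.e.~a $\Delta$-semisimple module.

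The second step is the key one: if $X\in\Add{\Delta}$ is $\Delta$-semisimple and $Y\subseteq X$, then Corollary \ref{cor:ultrastronglyqhdeltass} guarantees that $Y$ is itself $\Delta$-semisimple. Writing $Y\cong\bigoplus_{k} \Delta(i_k,j_k)$ exhibits $Y$ as a quotient of a direct sum of modules in $\Delta$, so $Y$ is generated by $\Delta$. This verifies Definition \ref{defi:hereditaryclass}, and so $\Delta$ is a hereditary class in $\Mod{B}$.

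The final step is just an application of Lemma \ref{lem:hereditaryclass} to conclude that $\Tr{\Delta}{-}$ is a hereditary preradical in $\Mod{B}$. There is no real obstacle here; the whole statement is a packaging of Corollary \ref{cor:ultrastronglyqhdeltass} into the preradical language of Subsection \ref{sec:preradicals}, so the only thing to be careful about is the bookkeeping that $\Add{\Delta}$ really consists of direct sums of standard modules, which uses only that each $\Delta(i,j)$ is uniserial.
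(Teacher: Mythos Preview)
Your proof is correct and follows essentially the same route as the paper: both invoke Corollary \ref{cor:ultrastronglyqhdeltass} to see that submodules of $\Delta$-semisimple modules are $\Delta$-semisimple (hence generated by $\Delta$), and then apply Lemma \ref{lem:hereditaryclass}. The only difference is cosmetic: the paper skips your first step, since a module in $\Add{\Delta}$ is by definition a summand of---and hence a submodule of---a $\Delta$-semisimple module, so the full identification $\Add{\Delta}=\{\Delta\text{-semisimple modules}\}$ (and the appeal to uniseriality) is not actually needed.
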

\begin{proof}
Let $N$ be a submodule of a module in $\Add{\Delta}$, so $N$ is contained in some $\Delta$-semisimple module $M$. By Corollary \ref{cor:ultrastronglyqhdeltass}, $N$ is still $\Delta$-semisimple, so it is trivially generated by $\Delta$. Hence the set $\Delta$ is hereditary. Lemma \ref{lem:hereditaryclass} implies that $\Tr{\Delta}{-}$ is a hereditary preradical in $\Mod{B}$.
\end{proof}
\begin{rem}
The preradical $\Tr{\Delta}{-}$ is not usually hereditary for an arbitrary quasihereditary algebra $B$ (not even if $B$ is right strongly quasihereditary).
\end{rem}

From now onwards we shall denote the functor $\Tr{\Delta}{-}$ by $\delta$. 
\begin{defi}
\label{defi:preradicaldelta}
For a RUSQ algebra $(B,\Phi, \sqsubseteq)$, let $\delta$ be the hereditary preradical $\Tr{\Delta}{-}$ in $\Mod{B}$.
\end{defi}
Next, we give a description of the submodule $\delta\B{M}$ of a module $M\in\mathcal{F}\B{\Delta}$.
\begin{prop}
\label{prop:deltahereditary2}
Let $(B,\Phi, \sqsubseteq)$ be a RUSQ algebra, and let $M \in \mathcal{F}\B{\Delta}$. Then $\delta\B{M}$ is the largest $\Delta$-semisimple submodule of $M$. Furthermore, $M/ \delta\B{M}$ lies in $\mathcal{F}\B{\Delta}$. In particular, $\delta\B{M}$ is the largest submodule of $M$ satisfying property \ref{item:b}.
\end{prop}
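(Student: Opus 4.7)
I would prove the proposition by establishing two facts: that $\delta(M)$ is $\Delta$-semisimple, and that $M/\delta(M) \in \mathcal{F}\B{\Delta}$. The rest of the statement is then automatic, because any $\Delta$-semisimple submodule of $M$ is, by definition, generated by standards and therefore contained in $\Tr{\Delta}{M}=\delta(M)$; so $\delta(M)$ is the largest $\Delta$-semisimple submodule, and in particular the largest submodule of $M$ satisfying property \ref{item:b}.

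For the first fact, the inclusion $\delta(M) \subseteq M \in \mathcal{F}\B{\Delta}$ together with Proposition \ref{prop:newprop}(1) places $\delta(M)$ in $\mathcal{F}\B{\Delta}$, and by its very construction as a trace, $\delta(M)$ is a $\Delta$-good factor of a direct sum of standard modules. The result from \cite[Theorem $3.2$]{MR2642018} recorded in Subsection \ref{subsec:moti}---that $\Delta$-good factors of $\Delta$-semisimple modules are $\Delta$-semisimple---then gives $\Delta$-semisimplicity of $\delta(M)$.

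For $M/\delta(M)\in\mathcal{F}\B{\Delta}$, I would apply the criterion in Proposition \ref{prop:newprop}(6) and argue by contradiction. Suppose some $L_{x,j}$ with $j<l_x$ appears as a summand of $\Soc{\B{M/\delta(M)}}$, and let $S \subseteq M$ be its preimage, so $\delta(M)\subsetneq S$ and $S/\delta(M)\cong L_{x,j}$. Since $\delta$ is hereditary (Lemma \ref{lem:Deltahereditary}), $\delta(S)=S\cap\delta(M)=\delta(M)$; and since $S\in\mathcal{F}\B{\Delta}$, Proposition \ref{prop:newprop}(6) implies every simple submodule of $S$ is some $L_{y,l_y}=\Delta\B{y,l_y}$, hence lies in $\delta(S)=\delta(M)$, which combined with $\delta(M)\subseteq S$ forces $\Soc{S}=\Soc{\delta(M)}$. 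Now I would count: writing $\delta(M)$ as a direct sum of standards, the number of simple summands of $\Soc{\delta(M)}$ equals the number of $\Delta$-summands of $\delta(M)$, that is $\Delta |\delta(M)|$. Moreover, the extra composition factor $L_{x,j}$ with $j<l_x$ contributes to none of the multiplicities $[S:L_{y,l_y}]$, so Proposition \ref{prop:newprop}(5) yields $\Delta |S|=\Delta |\delta(M)|$. Hence the number of simple summands of $\Soc{S}$ coincides with $\Delta |S|$, and Corollary \ref{cor:ultrastronglyqhdeltass} forces $S$ to be $\Delta$-semisimple. But then $S\subseteq\delta(M)$, contradicting $\delta(M)\subsetneq S$. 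The main obstacle is precisely this last step: a more direct approach, via trying to split the sequence $0\to\delta(M)\to S\to L_{x,j}\to 0$, seems to require $\operatorname{Ext}^{1}$-vanishing between standards within the same $i$-block, which need not hold; the counting argument routed through Corollary \ref{cor:ultrastronglyqhdeltass} is what makes the contradiction clean.
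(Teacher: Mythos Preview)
Your proof is correct. The first part, showing $\delta(M)$ is $\Delta$-semisimple, is essentially the paper's argument: both use that $\delta(M)$ is a $\Delta$-good quotient of a $\Delta$-semisimple module, the paper phrasing this as ``$f$ is a split epic'' and you invoking the result from \cite{MR2642018} recorded in Subsection~\ref{subsec:moti}.

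Your argument for $M/\delta(M)\in\mathcal{F}(\Delta)$ is genuinely different. The paper works structurally: it first identifies $\delta(Q_{i,l_i})=\Delta(i,1)$ using that $Q_{i,l_i}=T(i,1)$ has simple socle, so $Q_{i,l_i}/\delta(Q_{i,l_i})=T(i,1)/\Delta(i,1)\in\mathcal{F}(\Delta)$ by tilting theory; then for arbitrary $M\in\mathcal{F}(\Delta)$ it uses heredity of $\delta$ (via Lemma~\ref{lem:hereditary}) to embed $M/\delta(M)$ into $Q_0(M)/\delta(Q_0(M))$, where $Q_0(M)$ is the injective hull, and concludes by closure of $\mathcal{F}(\Delta)$ under submodules. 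Your route is combinatorial: you test the socle criterion of Proposition~\ref{prop:newprop}(6) directly on $M/\delta(M)$ and, assuming a bad socle summand $L_{x,j}$ with $j<l_x$, you use heredity of $\delta$ and the counting characterisation of Corollary~\ref{cor:ultrastronglyqhdeltass} to force the preimage $S$ to be $\Delta$-semisimple, contradicting maximality of $\delta(M)$. Your approach avoids the tilting module $T(i,1)$ and injective hulls entirely, relying only on the numerical criterion already established; the paper's approach, by contrast, exhibits $M/\delta(M)$ explicitly as a submodule of something in $\mathcal{F}(\Delta)$, which is arguably more informative and sets up the inductive use of the injective hull that reappears later (e.g.\ in the proof of Theorem~\ref{thm:socdelta}).
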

\begin{proof}
By the definition of $\Tr{\Delta}{-}$, there is an epic $f$ from a $\Delta$-semisimple module $M'$ to $\delta\B{M}$. Note that both $\delta\B{M}$ and $\Ker{f}$ are in $\mathcal{F}\B{\Delta}$, since this category is closed under submodules. As a consequence, $f$ must be a split epic. Hence $\delta\B{M}$ is $\Delta$-semisimple. By the definition of $\Tr{\Delta}{-}$ it is clear that every $\Delta$-semisimple submodule of $M$ must be contained in $\delta\B{M}$. This shows that $\delta\B{M}$ is the largest $\Delta$-semisimple submodule of $M \in \mathcal{F}\B{\Delta}$.

To conclude this proof it is enough to show that $M/\delta\B{M}$ lies in $\mathcal{F}\B{\Delta}$. We start by proving that this holds for the injective modules $Q_{i,l_i}=T\B{i,1}$ (recall Proposition \ref{prop:newprop}). Note that $\Delta\B{i,1} \subseteq \delta\B{Q_{i,l_i}}$, as $\Delta\B{i,1}$ is a submodule of $T\B{i,1}$. Since $Q_{i,l_i}$ has simple socle $L_{i,l_i}$, then $\delta\B{Q_{i,l_i}}$ has to be isomorphic to some standard module $\Delta\B{i,j}$. But then we must have $\Delta\B{i,1}=\delta\B{Q_{i,l_i}}$, and consequently $Q_{i,l_i}/ \delta\B{Q_{i,l_i}} = T\B{i,1}/\Delta\B{i,1}$ is in $\mathcal{F}\B{\Delta}$. Let now $Q$ be a finite direct sum of injective modules of type $Q_{i,l_i}$. The module $Q/\delta\B{Q}$ still lies in $\mathcal{F}\B{\Delta}$ because preradicals preserve finite direct sums (see part 3 of Lemma \ref{lem:preradicalprops}). Consider now $M$ in $\mathcal{F}\B{\Delta}$. By Proposition \ref{prop:newprop}, the injective hull $q_0: M \longrightarrow Q_0\B{M}$ of $M\in \mathcal{F}\B{\Delta}$ is such that $Q_0\B{M}$ is a direct sum of injectives of type $Q_{i,l_i}$. By part 3 of Lemma \ref{lem:hereditary}, $q_0$ gives rise to a monic $M/\delta\B{M} \longrightarrow Q_0\B{M}/ \delta(Q_0\B{M})$, and by our previous observation $Q_0\B{M}/ \delta(Q_0\B{M})$ lies in $\mathcal{F}\B{\Delta}$. As $\mathcal{F}\B{\Delta}$ is closed under submodules, the module $M/ \delta\B{M}$ belongs to $\mathcal{F}\B{\Delta}$.
\end{proof}

\begin{ex}
Note that for an arbitrary quasihereditary algebra the modules $\delta\B{M}$, $M \in \mathcal{F}\B{\Delta}$, are not usually $\Delta$-semisimple (not even $\Delta$-good). Indeed, for the algebra in Example \ref{ex:karinparker}, we have $\delta\B{P_2}=\Tr{\Delta}{P_2}=\Rad{P_2}$, which is not $\Delta$-semisimple.
\end{ex}
\subsubsection{Filtrations arising from preradicals}
\label{subsec:filpreradicals}
Our next goal is to define $\Delta$-semisimple filtration and $\Delta$-semisimple length for modules in $\mathcal{F}\B{\Delta}$, over some RUSQ algebra $B$. For this, some elementary results about preradicals are needed.

Let $\tau$ and $\upsilon$ be preradicals (over an arbitrary Artin algebra $B$). Write $\tau \leq \upsilon$ if $\tau$ is a subfunctor of $\upsilon$. The functor $\tau \circ \upsilon$ is a preradical, and $\tau \circ \upsilon \leq \upsilon$. For $M$ in $\Mod{B}$ define $\tau \bullet \upsilon \B{M}$ as the submodule of $M$ containing $\upsilon\B{M}$, satisfying
\[
\tau \B{M / \upsilon \B{M}}= \tau \bullet \upsilon \B{M}/ \upsilon\B{M}.
\]
The operator $\tau \bullet \upsilon$ is still a predadical. By construction, $\upsilon \leq \tau \bullet \upsilon$. 

By the characterisation of hereditary radicals given in Lemma \ref{lem:hereditary}, it follows that $\tau \circ \upsilon$ is hereditary if both $\tau$ and $\upsilon$ are hereditary. We also have that $\tau \bullet \upsilon$ is hereditary, whenever $\tau$ and $\upsilon$ are both hereditary -- the functor $1/ \B{\tau \bullet \upsilon}$ is naturally isomorphic to $\B{1/ \tau} \circ \B{1/ \upsilon}$.

Similarly to the composition of preradicals, the operation $\bullet$ is associative. Given a preradical $\tau$, let $\tau^0$ be the identity functor in $\Mod{B}$ and let $\tau_0$ be the zero preradical. For $m \in \Zp$, define $\tau^m:= \tau \circ \tau^{m-1}$ and $\tau_m:= \tau \bullet \tau_{m-1}$. We summarise the properties of these preradicals.
\begin{lem}
\label{lem:summary}
Let $\tau$ be a preradical in $\Mod{B}$.
\begin{enumerate}
\item For every $m \geq 1$, $\tau^m \leq \tau^{m-1}$ and $\tau_{m-1} \leq \tau_m$.
\item For every $M$ in $\Mod{B}$ there is $m \geq 0$ such that $\tau^{m} \B{M}= \tau^{m+1}\B{M}$.
\item For every $M$ in $\Mod{B}$ there is $m \geq 0$ such that $\tau_{m} \B{M}= \tau_{m+1}\B{M}$.
\end{enumerate}
\end{lem}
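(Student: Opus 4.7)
The plan is to verify the three claims directly from the definitions, the only non-formal ingredient being the finite length of finitely generated modules over the Artin algebra $B$.

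For part (1), I would observe that since $\tau$ is a subfunctor of the identity, $\tau(N) \subseteq N$ for every $B$-module $N$. Specialising $N = \tau^{m-1}\B{M}$ gives $\tau^m\B{M} = \tau\B{\tau^{m-1}\B{M}} \subseteq \tau^{m-1}\B{M}$, which is the inclusion $\tau^m \leq \tau^{m-1}$. The inclusion $\tau_{m-1} \leq \tau_m$ is immediate from the defining equation $\tau_m\B{M}/\tau_{m-1}\B{M} = \tau\B{M/\tau_{m-1}\B{M}}$, since $\tau_m\B{M}$ is defined as a submodule of $M$ \emph{containing} $\tau_{m-1}\B{M}$.

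For parts (2) and (3), part (1) shows that $\B{\tau^m\B{M}}_{m \geq 0}$ is a descending chain of submodules of $M$ and $\B{\tau_m\B{M}}_{m \geq 0}$ is an ascending chain of submodules of $M$. Since $M$ is a finitely generated module over the Artin algebra $B$, it has finite composition length; therefore both chains must stabilise after finitely many steps, which is exactly what (2) and (3) require.

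There really is no hard step here: the argument is a direct definition-chase followed by an invocation of the Artinian/Noetherian property of finitely generated $B$-modules. The only mild bookkeeping I would double-check is that $\tau^m$ and $\tau_m$ are themselves preradicals so that part (1) applies with each successive $m$; this has already been observed in the paragraph immediately preceding the lemma (together with the associativity of $\bullet$ that justifies writing $\tau_m = \tau \bullet \tau_{m-1}$ unambiguously).
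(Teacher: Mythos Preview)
Your proof is correct and is exactly the kind of direct verification the paper has in mind: the lemma is stated there without proof, as a summary of elementary properties, and your argument (subfunctor of the identity for part~(1), finite length of modules over an Artin algebra for parts~(2) and~(3)) is the standard justification.
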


The preradicals $\tau_m$ (and $\tau^m$), $m \in \Znn$, give rise to special filtrations. 
\begin{lem}[{\cite[§$1.3.3$]{thesis}}]
\label{lem:lemita1}
Let $\tau$ be a preradical. Suppose that $\tau\B{M}\neq 0$ for every nonzero $B$-module $M$. Given $M$ in $\Mod{B}$, there is a unique integer $l^{\B{\tau , \bullet}}\B{M} = n \geq 0$ such that $\tau_{n}\B{M}=M$, and $ \tau_{m-1}\B{M} \subset \tau_m \B{M}$ for every $m$ satisfying $1 \leq m \leq n$. Moreover, for $m \leq l^{\B{\tau , \bullet}}\B{M}$, we have
\[
l^{\B{\tau , \bullet}}\B{M/ \tau_m \B{M}} = n -m.
\]
\end{lem}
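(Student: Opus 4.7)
The plan is to combine Lemma \ref{lem:summary} (3) (which guarantees that the ascending chain $(\tau_m(M))_{m \geq 0}$ eventually stabilises) with the standing hypothesis $\tau(N) \neq 0$ for $N \neq 0$ (which forces stabilisation to occur precisely at $M$ itself). The uniqueness claim then reduces to observing that $n$ must coincide with the first stabilisation index, and the ``moreover'' part will follow from a natural compatibility of the iterated preradical $\tau_{\bullet}$ with quotients of the form $M/\tau_m(M)$.

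First I would set $n$ to be the smallest non-negative integer with $\tau_n(M) = \tau_{n+1}(M)$; such $n$ exists by Lemma \ref{lem:summary} (3), and by minimality the inclusions $\tau_{m-1}(M) \subsetneq \tau_m(M)$ are strict for $1 \leq m \leq n$. To see that $\tau_n(M) = M$, unwind the definition of $\tau_{n+1} = \tau \bullet \tau_n$: the quotient $\tau_{n+1}(M)/\tau_n(M)$ equals $\tau(M/\tau_n(M))$. Since $\tau_n(M) = \tau_{n+1}(M)$, this means $\tau(M/\tau_n(M)) = 0$, so the hypothesis on $\tau$ forces $M/\tau_n(M) = 0$. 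This gives existence; uniqueness of $n$ is immediate, as any integer satisfying the stated strictness and equality conditions must equal the first index at which the chain reaches $M$.

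For the moreover part, the key technical step is the identification
\[
\tau_k\bigl(M/\tau_m(M)\bigr) = \tau_{k+m}(M)/\tau_m(M) \qquad \text{for all } k \geq 0,
\]
which I would establish by induction on $k$. The base case $k = 0$ is trivial. For the inductive step, set $N = M/\tau_m(M)$. By the inductive hypothesis, the third isomorphism theorem gives $N/\tau_k(N) \cong M/\tau_{k+m}(M)$. Applying $\tau$ to both sides and using the defining property of $\bullet$ twice, namely $\tau_{k+1}(N)/\tau_k(N) = \tau(N/\tau_k(N))$ and $\tau_{k+m+1}(M)/\tau_{k+m}(M) = \tau(M/\tau_{k+m}(M))$, yields the identification at level $k+1$.

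With the identification in hand, the first part of the lemma applied to the module $M/\tau_m(M)$ gives $l^{(\tau,\bullet)}(M/\tau_m(M))$ as the unique index $k$ at which the chain $\tau_k(M/\tau_m(M))$ first equals $M/\tau_m(M)$. By the identification, this is exactly the first $k$ for which $\tau_{k+m}(M) = M$, namely $k = n - m$. The main obstacle I anticipate is the bookkeeping in the inductive identity, since one must carefully distinguish the definition of $\tau \bullet \upsilon$ as a submodule of the ambient module from the quotient it determines; but this is purely formal and follows from the definitions in Subsection \ref{subsec:filpreradicals}.
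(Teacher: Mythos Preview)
Your argument is correct. Note, however, that the paper does not actually supply a proof of this lemma: it is simply quoted from \cite[§$1.3.3$]{thesis}, so there is nothing in the present paper to compare against. What you have written is the natural elementary verification, and the identification $\tau_k(M/\tau_m(M)) = \tau_{k+m}(M)/\tau_m(M)$ is exactly the expected ingredient; your caveat about distinguishing the canonical isomorphism $N/\tau_k(N) \cong M/\tau_{k+m}(M)$ from a literal equality is the only point requiring care, and you have handled it appropriately.
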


\begin{lem}[{\cite[§$1.3.3$]{thesis}}]
\label{lem:lemita2}
Let $\tau$ be a hereditary preradical. Then $\tau_m$ is also a hereditary preradical, and $\tau_m \circ \tau_{m'}=\tau_{\min\{m,m'\}}$ for every $m, m' \geq 0$. Furthermore, if $\tau\B{M}\neq 0$ for every $M\neq 0$, the following hold for $N$ and $M$ in $\Mod{B}$:
\begin{enumerate}
\item if $N \subseteq M$ then $l^{\B{\tau , \bullet}}\B{N} \leq l^{\B{\tau , \bullet}}\B{M}$;
\item if $m \leq l^{\B{\tau , \bullet}}\B{M}$, then $\tau_m\B{M}$ is the largest submodule $N$ of $M$ such that $l^{\B{\tau , \bullet}}\B{N}=m$.
\end{enumerate}
\end{lem}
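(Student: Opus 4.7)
The plan is to verify the four assertions in the stated order, since each later part uses earlier ones. For the hereditary claim, I would induct on $m$: the base $\tau_0 = 0$ is trivial, and for $m \geq 1$ the preceding discussion has already noted that $\tau \bullet \upsilon$ is hereditary whenever $\tau, \upsilon$ are, via the natural isomorphism $1/\B{\tau \bullet \upsilon} \cong \B{1/\tau} \circ \B{1/\upsilon}$ together with Lemma \ref{lem:hereditary}. Hence $\tau_m = \tau \bullet \tau_{m-1}$ is hereditary.

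For the composition identity $\tau_m \circ \tau_{m'} = \tau_{\min\{m,m'\}}$, assume without loss of generality that $m \leq m'$. Iterating part 1 of Lemma \ref{lem:lemita1} gives $\tau_m \leq \tau_{m'}$, so $\tau_m\B{M} \subseteq \tau_{m'}\B{M}$ for every $M$. Applying heredity of $\tau_m$ to the inclusion $\tau_{m'}\B{M} \subseteq M$ then yields
\[
\tau_m\B{\tau_{m'}\B{M}} = \tau_{m'}\B{M} \cap \tau_m\B{M} = \tau_m\B{M}.
\]
The case $m > m'$ is analogous, using the reverse inclusion $\tau_{m'}\B{M} \subseteq \tau_m\B{M}$.

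For assertion (1) of the \emph{moreover} clause, let $N \subseteq M$ and set $m = l^{\B{\tau , \bullet}}\B{M}$, so that $\tau_m\B{M} = M$ by Lemma \ref{lem:lemita1}. Heredity of $\tau_m$ gives $\tau_m\B{N} = N \cap \tau_m\B{M} = N$, whence $l^{\B{\tau , \bullet}}\B{N} \leq m$. For assertion (2), I would first establish that $l^{\B{\tau , \bullet}}\B{\tau_m\B{M}} = m$. The upper bound $\leq m$ is immediate from $\tau_m \circ \tau_m = \tau_m$. For the reverse inequality, if $\tau_k\B{\tau_m\B{M}} = \tau_m\B{M}$ with $k < m$, then the composition identity forces $\tau_k\B{M} = \tau_m\B{M}$, contradicting the strict chain $\tau_k\B{M} \subsetneq \cdots \subsetneq \tau_m\B{M}$ furnished by Lemma \ref{lem:lemita1} (whose hypothesis $m \leq l^{\B{\tau , \bullet}}\B{M}$ is in force). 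Maximality then follows: any $N \subseteq M$ with $l^{\B{\tau , \bullet}}\B{N} = m$ satisfies $\tau_m\B{N} = N$ by Lemma \ref{lem:lemita1}, and heredity yields $N = \tau_m\B{N} = N \cap \tau_m\B{M} \subseteq \tau_m\B{M}$.

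The argument is largely formal bookkeeping once heredity is secured; the only slightly delicate step is pinning down $l^{\B{\tau , \bullet}}\B{\tau_m\B{M}} \geq m$, which hinges on the strict-inclusion condition built into the definition of $l^{\B{\tau , \bullet}}$ in Lemma \ref{lem:lemita1}, and on the previously established composition identity.
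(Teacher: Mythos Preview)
The paper does not include a proof of this lemma; it is stated with a citation to \cite[\S1.3.3]{thesis} and left at that. Your argument is correct and proceeds exactly along the lines one would expect from the preparatory material the paper does provide (the remark that $\tau\bullet\upsilon$ is hereditary when $\tau,\upsilon$ are, together with Lemmas \ref{lem:summary} and \ref{lem:lemita1}).

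One small correction: when you write ``Iterating part 1 of Lemma \ref{lem:lemita1}'' to obtain $\tau_m \leq \tau_{m'}$ for $m\leq m'$, the reference should be to part (1) of Lemma \ref{lem:summary}; Lemma \ref{lem:lemita1} has no enumerated parts, and its strict-inclusion clause only applies for indices up to $l^{(\tau,\bullet)}(M)$, whereas you need the (non-strict) inclusion unconditionally. This is purely a citation slip and does not affect the mathematics.
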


\subsubsection{\texorpdfstring{$\Delta$}{[Delta]}-semisimple filtrations and \texorpdfstring{$\Delta$}{[Delta]}-semisimple length}

Suppose once again that $(B, \Phi, \sqsubseteq)$ is a RUSQ algebra, and consider the hereditary preradical $\delta$. Note that $\delta\B{M}\neq 0$ for every nonzero module $M$ in $\Mod{B}$ as
\[ \Soc{M} \subseteq \Tr{\Delta}{M} =\delta\B{M}. \]
In fact, we have $\Soc{M}=\Soc{\delta\B{M}}$. We may construct the preradicals $\delta_m$ in $\Mod{B}$ defined recursively in §\ref{subsec:filpreradicals}. Then Lemmas \ref{lem:summary}, \ref{lem:lemita1} and \ref{lem:lemita2} hold for the preradicals $\delta_{m}$. In particular, $\delta_{m}$ is a hereditary preradical for every $m \in \Znn$.

\begin{lem}
\label{lem:deltasemisimplefiltration}
Let $(B,\Phi, \sqsubseteq)$ be a RUSQ algebra. If $M$ is in $\mathcal{F}\B{\Delta}$ then so is $M/ \delta_m \B{M}$, for any $m \geq 0$.
\end{lem}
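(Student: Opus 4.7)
The plan is to induct on $m$, reducing the statement to the base case already established in Proposition \ref{prop:deltahereditary2}, which says that $N/\delta(N) \in \mathcal{F}(\Delta)$ whenever $N \in \mathcal{F}(\Delta)$.

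For $m=0$, we have $\delta_0(M) = 0$, so $M/\delta_0(M) = M$ lies in $\mathcal{F}(\Delta)$ by assumption. For the inductive step, suppose the claim holds for $m-1$, i.e., $N := M/\delta_{m-1}(M) \in \mathcal{F}(\Delta)$. By the recursive definition of $\delta_m = \delta \bullet \delta_{m-1}$ from §\ref{subsec:filpreradicals}, the submodule $\delta_m(M)$ contains $\delta_{m-1}(M)$ and satisfies
\[
\delta_m(M)/\delta_{m-1}(M) = \delta\bigl(M/\delta_{m-1}(M)\bigr) = \delta(N).
\]
Taking the quotient by $\delta_{m-1}(M)$ we obtain the isomorphism
\[
M/\delta_m(M) \cong N/\delta(N).
\]

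Now apply Proposition \ref{prop:deltahereditary2} to $N \in \mathcal{F}(\Delta)$: it gives $N/\delta(N) \in \mathcal{F}(\Delta)$. Hence $M/\delta_m(M) \in \mathcal{F}(\Delta)$, completing the induction.

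There is no real obstacle here; the statement is essentially a formal consequence of the inductive definition of the preradicals $\delta_m$ together with the fact, already proved in Proposition \ref{prop:deltahereditary2}, that the class $\mathcal{F}(\Delta)$ is preserved under quotienting by $\delta$. The only thing to be careful about is to unwind the definition of $\delta_m$ correctly, identifying $\delta_m(M)/\delta_{m-1}(M)$ with $\delta$ applied to the previous quotient.
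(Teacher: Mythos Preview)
Your proof is correct and follows essentially the same approach as the paper's: both argue by induction on $m$, reducing to Proposition~\ref{prop:deltahereditary2}. The only cosmetic difference is the bracketing: you use the defining identity $\delta_m = \delta \bullet \delta_{m-1}$ to write $M/\delta_m(M) \cong N/\delta(N)$ with $N = M/\delta_{m-1}(M)$, whereas the paper uses the associativity of $\bullet$ to write $\delta_m = \delta_{m-1}\bullet\delta$ and hence $M/\delta_m(M) \cong (M/\delta(M))/\delta_{m-1}(M/\delta(M))$, applying the inductive hypothesis to $M/\delta(M)$.
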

\begin{proof}
By Proposition \ref{prop:deltahereditary2}, the claim holds for $m=1$. Suppose $m \geq 2$. Then
\begin{align*}
M/ \delta_m \B{M} & \cong \B{M / \delta\B{M}} / \B{\delta_{m-1} \bullet \delta \B{M} / \delta\B{M} }  \\ &= \B{M/ \delta\B{M}} / \B{\delta_{m-1}\B{M / \delta\B{M}}},
\end{align*}
so by induction $M / \delta_m \B{M}$ belongs to $\mathcal{F}\B{\Delta}$.
\end{proof}
Given a module $M$ in $\mathcal{F}\B{\Delta}$, we may consider the filtration
\begin{equation}
\label{eq:deltasemisimplefiltartion}
0 \subset \delta\B{M} \subset \cdots \subset \delta_{m}\B{M}=M ,
\end{equation}
where $m=l^{\B{\delta, \bullet}}\B{M}$ is as defined in Lemma \ref{lem:lemita1}. The factors of this filtration are $\Delta$-semisimple: by Lemma \ref{lem:deltasemisimplefiltration} and Proposition \ref{prop:deltahereditary2} the modules $ \delta_i (M)/\delta_{i-1} (M)= \delta(M/\delta_{i-1}(M))$ are $\Delta$-semisimple. We call \eqref{eq:deltasemisimplefiltartion} the \emph{$\Delta$-semisimple filtration} of $M \in \mathcal{F}\B{\Delta}$.
\begin{defi}
\label{defi:deltasemisimplelength}
The \emph{$\Delta$-semisimple length} of a module $M$ in $\mathcal{F}\B{\Delta}$, denoted by $\dssl{M}$, is the length of the $\Delta$-semisimple filtration of $M$, i.e.~it is given by the number $l^{\B{\delta, \bullet}}\B{M}$ (as in Lemma \ref{lem:lemita1}).
\end{defi}

\section{\texorpdfstring{$\Delta$}{[Delta]}-semisimple filtrations of modules over the ADR algebra}
\label{sec:manuela}
The ADR algebra of an Artin algebra $A$,  $R=\B{R_A, \Lambda, \unlhd}$, is our prototype of a RUSQ algebra. We now prove some results specific to the $\Delta$-semisimple filtrations of $\Delta$-good modules over the ADR algebra. Throughout this section the underlying quasihereditary algebra will be $\B{R, \Lambda, \unlhd}$, where the poset $\B{\Lambda, \unlhd}$ is as defined in \eqref{eq:posetadr} and \eqref{eq:guentanaborrabo}. For the proof of the next results note that the left exact functor $\Hom{A}{G}{-}$ is fully faithful since $G$ is a generator of $\Mod{A}$ (see \cite[§8--§10]{MR0349747}).
\begin{lem}
\label{lem:P_{i,L}}
Let $M_1$ and $M_2$ be in $\Mod{A}$, with $M_1 \subseteq M_2$. There is a canonical embedding
\[
\begin{tikzcd}[ampersand replacement=\&]
\Hom{A}{G}{M_2}/\Hom{A}{G}{M_1} \arrow[hook]{r}{\iota} \& \Hom{A}{G}{M_2/M_1}
\end{tikzcd}
\]
and the induced morphisms
\begin{gather*}\begin{tikzpicture}[baseline= (a).base]
\node[scale=.9] (a) at (0,0){
\begin{tikzcd}[ampersand replacement=\&, column sep=normal]
\Hom{R}{P_{i,l_i}}{\Hom{A}{G}{M_2}/\Hom{A}{G}{M_1}} \arrow[hook]{r}{\iota_*} \& \Hom{R}{P_{i,l_i}}{\Hom{A}{G}{M_2/M_1}},
\end{tikzcd}
};
\end{tikzpicture}\\
\begin{tikzpicture}[baseline= (a).base]
\node[scale=.9] (a) at (0,0){
\begin{tikzcd}[ampersand replacement=\&, column sep=normal]
\Hom{R}{\Hom{A}{G}{M_2/M_1}}{Q_{i,l_i}}\arrow[two heads]{r}{\iota^*} \&  \Hom{R}{\Hom{A}{G}{M_2}/\Hom{A}{G}{M_1}}{Q_{i,l_i}} 
\end{tikzcd}
};
\end{tikzpicture}
\end{gather*}
are isomorphisms.
\end{lem}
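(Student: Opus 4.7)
The plan is to deduce both isomorphisms from one common calculation, after constructing $\iota$ in the obvious way.

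First I would produce the embedding $\iota$ by applying the left exact functor $\Hom{A}{G}{-}$ to the short exact sequence $0\to M_1\to M_2\to M_2/M_1\to 0$; the kernel of $\Hom{A}{G}{M_2}\to \Hom{A}{G}{M_2/M_1}$ is exactly $\Hom{A}{G}{M_1}$, so passing to the quotient yields the canonical monic $\iota$. Let $C\coloneqq\Coker{\iota}$, so there is a short exact sequence
\[
\begin{tikzcd}[ampersand replacement=\&]
0 \arrow{r} \& \Hom{A}{G}{M_2}/\Hom{A}{G}{M_1} \arrow{r}{\iota} \& \Hom{A}{G}{M_2/M_1} \arrow{r} \& C \arrow{r} \& 0
\end{tikzcd}.
\]
The whole problem reduces to proving that $\Hom{R}{P_{i,l_i}}{C}=0$ and $\Hom{R}{C}{Q_{i,l_i}}=0$.

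The key observation is that $\LL{P_i}=l_i$ implies $P_i/\RAD{l_i}{P_i}=P_i$, and hence $P_{i,l_i}=\Hom{A}{G}{P_i}$, so $P_{i,l_i}$ corresponds under the fully faithful functor $\Hom{A}{G}{-}$ to the \emph{projective} $A$-module $P_i$. Consequently $\Hom{R}{P_{i,l_i}}{-}$ coincides with multiplication by the idempotent $e_{i,l_i}\in R$, which is exact, and for any $R$-module of the form $\Hom{A}{G}{X}$ agrees naturally with $\Hom{A}{P_i}{X}$. Applying this functor to the four-term sequence
\[
\begin{tikzcd}[ampersand replacement=\&, column sep=small]
0\arrow{r}\& \Hom{A}{G}{M_1}\arrow{r}\& \Hom{A}{G}{M_2}\arrow{r}\& \Hom{A}{G}{M_2/M_1}\arrow{r}\& C\arrow{r}\& 0
\end{tikzcd}
\]
yields an exact sequence whose first three terms coincide with $\Hom{A}{P_i}{-}$ applied to $0\to M_1\to M_2\to M_2/M_1\to 0$. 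Because $P_i$ is $A$-projective, that latter sequence is already short exact, forcing $\Hom{R}{P_{i,l_i}}{C}=e_{i,l_i}C=0$. Since $\Hom{R}{P_{i,l_i}}{-}$ is exact, this is precisely what is needed to upgrade $\iota_*$ to an isomorphism.

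For the second isomorphism I would use that $Q_{i,l_i}$ is injective, so $\Hom{R}{-}{Q_{i,l_i}}$ is exact and turns the defining sequence of $C$ into
\[
0\to \Hom{R}{C}{Q_{i,l_i}}\to \Hom{R}{\Hom{A}{G}{M_2/M_1}}{Q_{i,l_i}}\xrightarrow{\iota^*} \Hom{R}{\Hom{A}{G}{M_2}/\Hom{A}{G}{M_1}}{Q_{i,l_i}}\to 0.
\]
Thus I only need $\Hom{R}{C}{Q_{i,l_i}}=0$. Since $Q_{i,l_i}$ is the injective envelope of the simple $L_{i,l_i}$, any nonzero map from $C$ would produce an $L_{i,l_i}$ in $\Soc{C}$, in particular a composition factor of type $L_{i,l_i}$. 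But $e_{i,l_i}C=0$ from the previous step, so no such composition factor exists, and $\iota^*$ is an isomorphism.

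The only genuine ingredient is the identification $P_{i,l_i}=\Hom{A}{G}{P_i}$ together with projectivity of $P_i$ in $\Mod{A}$; after that the two statements are just bookkeeping with the idempotent $e_{i,l_i}$ and the exactness properties of $\Hom{R}{P_{i,l_i}}{-}$ and $\Hom{R}{-}{Q_{i,l_i}}$. The potentially confusing point — checking that the maps obtained from the idempotent calculation really are the ones induced by $\iota$ — is handled by naturality of all the maps involved, together with full faithfulness of $\Hom{A}{G}{-}$.
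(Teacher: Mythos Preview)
Your proof is correct and rests on the same key observation as the paper's: $P_{i,l_i}=\Hom{A}{G}{P_i}$ with $P_i$ projective in $\Mod{A}$. The paper argues more directly, showing $\iota_*$ is surjective by taking an arbitrary $f_*\colon P_{i,l_i}\to\Hom{A}{G}{M_2/M_1}$, writing it as $\Hom{A}{G}{f}$ via full faithfulness, lifting $f$ through the epic $M_2\to M_2/M_1$ by projectivity of $P_i$, and pushing the lift back through the functor; it then asserts that $\iota^*$ is handled analogously. Your route via the cokernel $C$ and the idempotent $e_{i,l_i}$ is a clean repackaging of the same lift, with the advantage that once $e_{i,l_i}C=0$ is established, the second isomorphism follows immediately from the simple socle of $Q_{i,l_i}$, without needing to spell out a dual lifting argument. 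Both approaches are short; yours makes the dependence of the two isomorphisms on a single vanishing statement more transparent.
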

\begin{proof}
The functor $\Hom{A}{G}{-}$ is left exact. Thus, it maps the canonical epic $\pi:M_2 \longrightarrow M_2/ M_1$ to the morphism $\pi_*$, which factors as
\[
\begin{tikzcd}[ampersand replacement=\&, column sep=tiny]
\Hom{A}{G}{M_2}\arrow[two heads]{rd}[swap]{\varpi} \arrow{rr}{\pi_*} \& \& \Hom{A}{G}{M_2/M_1} \\
\& \Hom{A}{G}{M_2}/ \Hom{A}{G}{M_1} \arrow[hook]{ru}[swap]{\iota}\&
\end{tikzcd}.
\]
Consider the monic $\iota_*$ obtained by applying the functor $\Hom{R}{P_{i,l_i}}{-}$ to $\iota$. Let $f_*$ be in $\Hom{R}{P_{i,l_i}}{\Hom{A}{G}{M_2/M_1}}$. Then $f_*=\Hom{A}{G}{f}$, for a map $f:P_i \longrightarrow M_2/ M_1$ in $\Mod{A}$. Since $P_i$ is projective, $f= \pi \circ t$ for some $t: P_i \longrightarrow M_2$. So $f_*=\pi_* \circ t_*= \iota \circ \varpi \circ t_*=\iota_*(\varpi \circ t_*)$, where $t_*=\Hom{A}{G}{t}$. This shows that $\iota_*$ is surjective, hence it is an isomorphism. The proof that $\iota^*$ is an isomorphism is analogous.
\end{proof}

Let $M_1$ and $M_2$ be in $\Mod{A}$, with $M_1 \subseteq M_2$. We shall regard the canonical embedding in Lemma \ref{lem:P_{i,L}},
\[
\begin{tikzcd}[ampersand replacement=\&]
\Hom{A}{G}{M_2}/\Hom{A}{G}{M_1} \arrow[hook]{r}{\iota} \& \Hom{A}{G}{M_2/M_1}
\end{tikzcd},
\]
as an inclusion of $R$-modules. According to Lemma $3.6$ in \cite{MR3510398}, $\Hom{A}{G}{M}$ lies in $\mathcal{F}\B{\Delta}$ for every $M$ in $\Mod{A}$. Since the category $\mathcal{F}\B{\Delta}$ is closed under submodules then both $\Hom{A}{G}{M_2}/\Hom{A}{G}{M_1}$ and $\Hom{A}{G}{M_2/M_1}$ are $\Delta$-good modules. Lemma \ref{lem:P_{i,L}} is hinting at a close relation between the $\Delta$-filtrations of the modules $\Hom{A}{G}{M_2}/\Hom{A}{G}{M_1}$ and $\Hom{A}{G}{M_2/M_1}$. We spell out this idea below.

\begin{cor}
\label{cor:neweasyses}
Let $M_1$ and $M_2$ be in $\Mod{A}$, with $M_1 \subseteq M_2$. Write $M=\Hom{A}{G}{M_2/M_1}$ and $M'=\Hom{A}{G}{M_2}/\Hom{A}{G}{M_1}$. All the composition factors of $M$ of type $L_{i,l_i}$ appear as composition factors of its submodule $M'$. In particular, $M$ and $M'$ have the same number of composition factors of type $L_{i,l_i}$. Moreover, $M$ and $M'$ lie in $\mathcal{F}\B{\Delta}$, $\Soc{M}= \Soc{M'}$, and the modules $M$ and $M'$ are filtered by the same number of standard modules.
\end{cor}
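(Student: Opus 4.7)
The plan is to deduce everything from Lemma \ref{lem:P_{i,L}} together with the structural properties of $\mathcal{F}\B{\Delta}$ over a RUSQ algebra recorded in Proposition \ref{prop:newprop}. The starting point is the short exact sequence
\[ 0 \longrightarrow M' \stackrel{\iota}{\longrightarrow} M \longrightarrow M/M' \longrightarrow 0, \]
together with the observation, made in the paragraph preceding the corollary, that $M=\Hom{A}{G}{M_2/M_1}$ lies in $\mathcal{F}\B{\Delta}$; closure of $\mathcal{F}\B{\Delta}$ under submodules (Proposition \ref{prop:newprop}, part 1) then automatically places $M'$ in $\mathcal{F}\B{\Delta}$ as well.

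Next I would apply the exact functor $\Hom{R}{P_{i,l_i}}{-}$ to this sequence. By Lemma \ref{lem:P_{i,L}} the induced map $\iota_*$ is an isomorphism, so $\Hom{R}{P_{i,l_i}}{M/M'}=0$. Since $P_{i,l_i}$ is the projective cover of the simple module $L_{i,l_i}$, this vanishing translates into $[M/M':L_{i,l_i}]=0$, and hence $[M:L_{i,l_i}]=[M':L_{i,l_i}]$ for every $i$. This is precisely the first assertion of the corollary: each $L_{i,l_i}$-composition factor of $M$ is already a composition factor of $M'$. Combined with part 5 of Proposition \ref{prop:newprop}, the totals $\sum_i [M:L_{i,l_i}]$ and $\sum_i [M':L_{i,l_i}]$ coincide, so $M$ and $M'$ are filtered by the same number of standard modules.

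For the socle equality I would argue as follows. From $M'\subseteq M$ one immediately gets $\Soc{M'}=\Soc{M}\cap M'$, so it suffices to verify $\Soc{M}\subseteq M'$. Any simple summand $L$ of $\Soc{M}$ is isomorphic to some $L_{i,l_i}$ by Proposition \ref{prop:newprop}, part 6; were $L$ not contained in $M'$ then simplicity would force $L\cap M'=0$, and $L$ would inject into $M/M'$, yielding a composition factor of type $L_{i,l_i}$ in $M/M'$ and contradicting the vanishing established above. Therefore $\Soc{M}\subseteq M'$ and the two socles coincide.

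The only slightly delicate step is the translation from ``$\iota_*$ is an isomorphism'' to ``$L_{i,l_i}$ has the same multiplicity in $M$ and $M'$''; this relies on the standard Artin-algebra identity $\dim_C \Hom{R}{P}{N}=[N:L]\cdot\dim_C\End{R}{L}$ when $P$ is the projective cover of the simple $L$. Since this is routine, I do not anticipate any real obstacle to the argument.
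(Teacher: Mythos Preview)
Your argument is correct and follows the same route as the paper's proof: use Lemma \ref{lem:P_{i,L}} to get $\Hom{R}{P_{i,l_i}}{M/M'}=0$, hence $[M/M':L_{i,l_i}]=0$, and then read off the socle equality and the equal number of $\Delta$-factors from parts 5 and 6 of Proposition \ref{prop:newprop}. Your write-up simply makes explicit the steps the paper leaves to the reader; the only cosmetic point is that, over an Artin $C$-algebra, one should speak of $C$-length rather than $\dim_C$, but the conclusion $\Hom{R}{P_{i,l_i}}{M/M'}=0 \Rightarrow [M/M':L_{i,l_i}]=0$ is of course unaffected.
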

\begin{proof}
By Lemma \ref{lem:P_{i,L}}, all the composition factors of $M$ isomorphic to $L_{i,l_i}$ appear as composition factors of its submodule $M'$. As $M$ lies in $\mathcal{F}\B{\Delta}$ then, by Proposition \ref{prop:newprop}, $\Soc{M}$ is a direct sum of simples of type $L_{i,l_i}$. Thus $\Soc{M'}=\Soc{M}$. Part 5 of Proposition \ref{prop:newprop} implies that the $\Delta$-filtrations of $M$ and $M'$ have the same number of factors.
\end{proof}
As we shall see next, the socle series of an $A$-module $M$ gives rise to the $\Delta$-semisimple filtration of $\Hom{A}{G}{M}$ in $\mathcal{F}\B{\Delta}$.
\begin{lem}
\label{lem:soc0}
Let $M$ be in $\Mod{A}$. Then
\begin{equation}
\label{eq:teresapreguicosa}
\Hom{A}{G}{\SOC{j}{M}}=\Tr{\bigoplus_{(k,l):\, l \leq j}P_{k,l}}{\Hom{A}{G}{M}}.
\end{equation}
Moreover, if $\SOC{j}{M}/ \SOC{j-1}{M} = \bigoplus_{\theta \in \Theta} L_{x_{\theta}}$, then
\[
\Hom{A}{G}{\SOC{j}{M}}/ \Hom{A}{G}{\SOC{j-1}{M}} = \bigoplus_{\theta \in \Theta} \Delta\B{x_{\theta},j}.
\]
\end{lem}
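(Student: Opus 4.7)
The trace identity \eqref{eq:teresapreguicosa} I would prove by double inclusion, leveraging that $\Hom{A}{G}{-}$ is fully faithful (since $G$ generates $\Mod{A}$). For the inclusion $\supseteq$, any $R$-morphism $P_{k,l} \to \Hom{A}{G}{M}$ with $l \leq j$ corresponds to an $A$-morphism $P_k/\RAD{l}{P_k} \to M$ whose image lies in $\SOC{l}{M} \subseteq \SOC{j}{M}$, so its image inside $\Hom{A}{G}{M}$ is contained in $\Hom{A}{G}{\SOC{j}{M}}$. For $\subseteq$, I decompose any $\phi \in \Hom{A}{G}{\SOC{j}{M}}$ along the summands of $G$: each restriction $\phi|_{P_i/\RAD{s}{P_i}}$ has image killed by $\RAD{j}{A}$, hence factors through $P_i/\RAD{\min(s,j)}{P_i}$, which is itself a summand of $G$ of the required type. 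Summing these contributions realises $\phi$ inside the trace.

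For the quotient identity, write $N_j := \Hom{A}{G}{\SOC{j}{M}}$. I build a morphism $h : \bigoplus_\theta \Delta(x_\theta, j) \to N_j/N_{j-1}$ as follows. For each $\theta$, the composition $P_{x_\theta} \twoheadrightarrow L_{x_\theta} \hookrightarrow \SOC{j}{M}/\SOC{j-1}{M}$ (the $\theta$-th inclusion) lifts along the epimorphism $\SOC{j}{M} \twoheadrightarrow \SOC{j}{M}/\SOC{j-1}{M}$ by projectivity of $P_{x_\theta}$ to some $\tilde g_\theta : P_{x_\theta} \to \SOC{j}{M}$. Since the image is killed by $\RAD{j}{A}$, $\tilde g_\theta$ factors as $P_{x_\theta}/\RAD{j}{P_{x_\theta}} \to \SOC{j}{M}$. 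Applying $\Hom{A}{G}{-}$ and projecting to $N_j/N_{j-1}$ yields a map $P_{x_\theta, j} \to N_j/N_{j-1}$ which annihilates $\Hom{A}{G}{\Rad{P_{x_\theta}}/\RAD{j}{P_{x_\theta}}}$: the module $\Rad{P_{x_\theta}}/\RAD{j}{P_{x_\theta}}$ is killed by $\RAD{j-1}{A}$, so its image under $\tilde g_\theta$ lies in $\SOC{j-1}{M}$. By Theorem \ref{prop:standard}, this map then factors through $\Delta(x_\theta, j)$, producing $h_\theta$ and hence $h$.

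To prove $h$ is an isomorphism, I first compose with the canonical embedding $N_j/N_{j-1} \hookrightarrow \Hom{A}{G}{\SOC{j}{M}/\SOC{j-1}{M}} = \bigoplus_\theta P_{x_\theta, 1}$ from Lemma \ref{lem:P_{i,L}}. By construction of $\tilde g_\theta$, the resulting composite on the $\theta$-summand is the projection-induced morphism $P_{x_\theta, j} \to P_{x_\theta, 1}$, which by Theorem \ref{prop:standard} factors as $P_{x_\theta, j} \twoheadrightarrow \Delta(x_\theta, j) \hookrightarrow P_{x_\theta, 1}$; hence the composite $\bigoplus_\theta \Delta(x_\theta, j) \to \bigoplus_\theta P_{x_\theta, 1}$ is the natural inclusion, and $h$ is injective. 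For surjectivity, I count composition multiplicities: full faithfulness gives $\Hom{R}{P_{i, k}}{N_j} \cong \Hom{A}{P_i}{\SOC{\min(k, j)}{M}}$, while $\End{R}{L_{i, k}} = \End{A}{L_i}$, so $[N_j : L_{i, k}] = [\SOC{\min(k, j)}{M} : L_i]$. A direct computation then yields $[N_j/N_{j-1} : L_{i, k}] = [\bigoplus_\theta \Delta(x_\theta, j) : L_{i, k}]$ for all $(i, k)$, using Proposition \ref{prop:newprop} to describe the composition factors on the right. Injectivity of $h$ combined with equality of Jordan--H\"older lengths then forces $h$ to be an isomorphism. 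The main technical hinge is the image identification in Theorem \ref{prop:standard} (that the projection-induced map $P_{x_\theta, j} \to P_{x_\theta, 1}$ has image $\Delta(x_\theta, j) = \RAD{j-1}{P_{x_\theta, 1}}$), which is precisely what converts the above construction into an honest injection.
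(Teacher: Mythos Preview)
Your proof is correct. For the trace identity \eqref{eq:teresapreguicosa}, your argument is essentially the paper's: you spell out directly the inclusion $\subseteq$ (factoring each component of $\phi$ through $P_i/\RAD{\min(s,j)}{P_i}$), whereas the paper cites \cite[Proposition 10.2]{MR0349747} for this, but the content is the same.

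For the quotient identity you take a genuinely different route. The paper argues structurally: it embeds $M':=N_j/N_{j-1}$ into $\bigoplus_\theta \Delta(x_\theta,1)$ with equal socle via Lemma~\ref{lem:P_{i,L}} and Corollary~\ref{cor:neweasyses}, then invokes Corollary~\ref{cor:ultrastronglyqhdeltass} (the RUSQ-specific fact that submodules of $\Delta$-semisimple modules are $\Delta$-semisimple) to conclude that $M'$ is itself $\Delta$-semisimple; finally the trace identity forces the top of $M'$ to consist of simples $L_{k,j}$, which together with the socle data pins down the summands as the $\Delta(x_\theta,j)$. Your argument instead builds the map $h$ explicitly, proves injectivity by composing with the embedding of Lemma~\ref{lem:P_{i,L}} and identifying the resulting diagonal with the injections $\Delta(x_\theta,j)\hookrightarrow P_{x_\theta,1}$ coming from Theorem~\ref{prop:standard}, and proves surjectivity by a direct composition-multiplicity count using full faithfulness and $\End{R}{L_{i,k}}\cong\End{A}{L_i}$. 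The trade-off: the paper's proof is shorter and showcases the RUSQ machinery already developed in Section~\ref{sec:deltass}, while yours is more constructive and bypasses Corollary~\ref{cor:ultrastronglyqhdeltass} entirely, at the cost of the extra multiplicity bookkeeping.
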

\begin{proof}
By \cite[Proposition $10.2$]{MR0349747} (see also \cite[Lemma $3.3$]{MR3510398}), $\Hom{A}{G}{\SOC{j}{M}}$ is generated by projectives $P_{k,l}$ satisfying $l \leq j$. This proves one of the inclusions in \eqref{eq:teresapreguicosa}. Consider now an arbitrary morphism $f_*: P_{k,l}\longrightarrow \Hom{A}{G}{M}$, with $l\leq j$. Note that $f_*= \Hom{A}{G}{f}$ for a certain map $f: P_{k}/ \RAD{l}{P_k} \longrightarrow M$.
Clearly, $\Ima{f} \subseteq \SOC{j}{M}$. But then
\[
\Ima{f_*} \subseteq \Hom{A}{G}{\Ima{f}} \subseteq \Hom{A}{G}{\SOC{j}{M}}.
\]
As $f_*$ was chosen arbitrarily, the other inclusion follows. This proves identity \eqref{eq:teresapreguicosa}.

To prove the second claim in the statement of the lemma, set
\[
M':=\Hom{A}{G}{\SOC{j}{M}}/ \Hom{A}{G}{\SOC{j-1}{M}},
\]
and assume that $\SOC{j}{M}/ \SOC{j-1}{M}$ is isomorphic to $\bigoplus_{\theta \in \Theta} L_{x_{\theta}}$. Recall that $\Delta (i,1)$ is isomorphic to $\Hom{A}{G}{L_i}$ (see Theorem \ref{prop:standard}). Lemma \ref{lem:P_{i,L}} and Corollary \ref{cor:neweasyses} imply that $M'$ is contained in
\[
\Hom{A}{G}{\SOC{j}{M}/ \SOC{j-1}{M}}=\bigoplus_{\theta \in \Theta} \Hom{A}{G}{L_{x_{\theta}}}= \bigoplus_{\theta \in \Theta} \Delta\B{x_{\theta},1}
\]
and that these modules have the same socle. By Corollary \ref{cor:ultrastronglyqhdeltass}, $M'$ is $\Delta$-semisimple. Finally, by the identity \eqref{eq:teresapreguicosa} (applied to $j$ and $j-1$), the module $M'$ must be generated by projectives of type $P_{i,j}$. This proves the second assertion of the lemma.
\end{proof}

Lemma \ref{lem:soc0} and Theorem \ref{thm:socdelta} are very useful to compute examples. For the proof of the next result, recall the characterisation of the preradical $\delta$ in Subsection \ref{subsec:thepreradicaldelta}, namely Proposition \ref{prop:deltahereditary2} and Lemma \ref{lem:deltasemisimplefiltration}.
\begin{thm}
\label{thm:socdelta}
Let $M$ be in $\Mod{A}$. The socle series of $M$ induces the $\Delta$-semisimple filtration of $\Hom{A}{G}{M}$. Formally,
\[
\delta_m \B{\Hom{A}{G}{M}} = \Hom{A}{G}{\SOC{m}{M}},
\]
for all $m \in \Znn$. In particular, $\dssl{(\Hom{A}{G}{M})}=\LL{M}$.
\end{thm}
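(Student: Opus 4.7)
The plan is to induct on $m$, with the case $m=0$ being trivial. The crux is the case $m=1$; the general inductive step will follow from it via heredity of $\delta$ and Corollary~\ref{cor:neweasyses}.

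For $m=1$, I would prove $\delta\B{\Hom{A}{G}{N}} = \Hom{A}{G}{\Soc N}$ for arbitrary $N \in \Mod{A}$. One inclusion is immediate: Lemma~\ref{lem:soc0} shows that $\Hom{A}{G}{\Soc N}$ is $\Delta$-semisimple, hence contained in $\delta\B{\Hom{A}{G}{N}}$ by Proposition~\ref{prop:deltahereditary2}. For the reverse, take any $\psi\colon \Delta\B{i,j} \to \Hom{A}{G}{N}$ and pre-compose with the projective cover $P_{i,j} \twoheadrightarrow \Delta\B{i,j}$ to obtain $\bar{\psi}\colon P_{i,j} \to \Hom{A}{G}{N}$. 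The short exact sequence~\eqref{eq:elreidsebastiao} from Theorem~\ref{prop:standard} identifies the kernel of $P_{i,j} \twoheadrightarrow \Delta\B{i,j}$ with $\Hom{A}{G}{\Rad P_i/\RAD{j}{P_i}}$. Since $G$ is a generator, $\Hom{A}{G}{-}$ is fully faithful, so $\bar{\psi}$ corresponds to an $A$-morphism $f\colon P_i/\RAD{j}{P_i} \to N$, and the above kernel containment forces $\Rad P_i/\RAD{j}{P_i} \subseteq \Ker f$. Hence $f$ factors through the simple top $L_i = P_i/\Rad P_i$, so $\Ima f \subseteq \Soc N$ and $\Ima \psi = \Ima \bar{\psi} \subseteq \Hom{A}{G}{\Soc N}$. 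Summing over all such $\psi$ yields the desired inclusion.

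For the inductive step ($m \geq 2$), set $M'' = M/\SOC{m-1}{M}$. By the recursion $\delta_m = \delta \bullet \delta_{m-1}$ and the inductive hypothesis $\delta_{m-1}\B{\Hom{A}{G}{M}} = \Hom{A}{G}{\SOC{m-1}{M}}$, it suffices to show that $\delta\B{\Hom{A}{G}{M}/\Hom{A}{G}{\SOC{m-1}{M}}}$ equals $\Hom{A}{G}{\SOC{m}{M}}/\Hom{A}{G}{\SOC{m-1}{M}}$. Corollary~\ref{cor:neweasyses} embeds the left-hand quotient in $\Hom{A}{G}{M''}$, and by heredity of $\delta$ (Lemma~\ref{lem:Deltahereditary}) its value on the quotient is the intersection with $\delta\B{\Hom{A}{G}{M''}}$. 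The already-established $m=1$ case, applied to $M''$, identifies the latter with $\Hom{A}{G}{\SOC{m}{M}/\SOC{m-1}{M}}$, and a direct unwinding of the embedding in Corollary~\ref{cor:neweasyses} (a class $[\phi]$ lies in this intersection precisely when $\phi(G) \subseteq \SOC{m}{M}$) produces exactly $\Hom{A}{G}{\SOC{m}{M}}/\Hom{A}{G}{\SOC{m-1}{M}}$, completing the induction. The identity $\dssl\B{\Hom{A}{G}{M}} = \LL{M}$ follows at once from Definition~\ref{defi:deltasemisimplelength}, since $\delta_m\B{\Hom{A}{G}{M}}$ strictly grows exactly while $\SOC{m-1}{M} \subsetneq \SOC{m}{M}$.

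The main obstacle is the $m=1$ case. The key technical input is the explicit kernel in~\eqref{eq:elreidsebastiao} combined with full faithfulness of $\Hom{A}{G}{-}$; once this is in hand, the inductive step is routine thanks to heredity of $\delta$.
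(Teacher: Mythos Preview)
Your proof is correct and takes a genuinely different route from the paper's, at both stages.

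For the base case $m=1$, the paper does not analyse individual maps out of $\Delta(i,j)$. Instead it invokes the fact (from \cite[Lemma~4.4]{MR3510398}) that $\Hom{A}{G}{-}$ preserves injective hulls, so that $\Hom{A}{G}{\Soc M}$ and $\Hom{A}{G}{M}$ share the same socle; combined with the inclusion $\Hom{A}{G}{\Soc M}\subseteq \delta(\Hom{A}{G}{M})$ and the $\Delta$-semisimplicity of both sides, this forces equality. Your argument instead exploits the exact sequence~\eqref{eq:elreidsebastiao} together with full faithfulness of $\Hom{A}{G}{-}$ to see directly that any $\psi\colon\Delta(i,j)\to\Hom{A}{G}{N}$ lands in $\Hom{A}{G}{\Soc N}$. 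This is more hands-on and has the advantage of being self-contained: you never need the injective-hull preservation result.

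For the inductive step, the paper works inside the quotient $Z_1=\Hom{A}{G}{M}/\Hom{A}{G}{\SOC{m-1}{M}}$ and argues via socles and composition factors: it shows $Z_2\subseteq\delta(Z_1)$, that both are $\Delta$-semisimple with the same socle, that $Z_1/Z_2\in\mathcal{F}(\Delta)$, and hence that $\delta(Z_1)/Z_2$ is a $\Delta$-good module with no composition factor of type $L_{i,l_i}$, forcing it to vanish. You instead use heredity of $\delta$ to identify $\delta(Z_1)$ with the intersection $Z_1\cap\delta(\Hom{A}{G}{M''})$ inside $\Hom{A}{G}{M''}$, apply the already-proved $m=1$ case to $M''$, and finish with an elementary description of the intersection. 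Your route makes the role of heredity (Lemma~\ref{lem:Deltahereditary}) completely transparent and cleanly reduces the induction to the base case; the paper's route leans more on the structural characterisation of $\Delta$-semisimple modules (Corollary~\ref{cor:ultrastronglyqhdeltass}) and on Corollary~\ref{cor:neweasyses}. Both are valid; yours is arguably the more streamlined of the two.
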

\begin{proof}
For $m$ satisfying $1 \leq m \leq \LL{M}$ we prove the claim by induction on $m$, starting with $m=1$. Note that $\Hom{A}{G}{\Soc{M}}$ is a direct sum of standard modules of type $\Hom{A}{G}{L_{i}}=\Delta\B{i,1}$, so $\Hom{A}{G}{\Soc{M}} \subseteq \delta\B{\Hom{A}{G}{M}}$. Since the functor $\Hom{A}{G}{-}$ preserves injective hulls (see \cite[Lemma $4.4$]{MR3510398}), the modules $\Hom{A}{G}{\Soc{M}}$ and $\Hom{A}{G}{M}$ have the same socle. Hence the previous inclusion must be an equality.

Suppose now that $2 \leq m \leq \LL{M}$, and set
\begin{gather*}
Z_1:=\Hom{A}{G}{M}/ \Hom{A}{G}{\SOC{m-1}{M}} \\
Z_2:= \Hom{A}{G}{\SOC{m}{M}}/ \Hom{A}{G}{\SOC{m-1}{M}}.
\end{gather*}
Since $\Hom{A}{G}{-}$ preserves injective hulls, the modules $\Hom{A}{G}{M/\SOC{m-1}{M}}$ and $\Hom{A}{G}{\SOC{m}{M}/\SOC{m-1}{M}}$ have the same socle. But then, by Corollary \ref{cor:neweasyses}, $Z_1$ and $Z_2$ have the same socle. Moreover, $Z_1$ belongs to $\mathcal{F}\B{\Delta}$. By Lemma \ref{lem:soc0}, $Z_2$ must be contained in $\delta\B{Z_1}$. So both $\delta\B{Z_1}$ and $Z_2$ are $\Delta$-semisimple modules with the same socle. By Corollary \ref{cor:neweasyses}, $Z_1/Z_2$ must be in $\mathcal{F}\B{\Delta}$. Since $\mathcal{F}\B{\Delta}$ is closed under submodules, then $\delta\B{Z_1}/ Z_2$ is in $\mathcal{F}\B{\Delta}$. We must have $\delta\B{Z_1}/ Z_2=0$, otherwise this factor module would have some composition factor of type $L_{i,l_i}$. By induction, we may suppose that $\delta_{m-1}(\Hom{A}{G}{M})=\Hom{A}{G}{\SOC{m-1}{M}}$. Then, the identity $Z_2=\delta\B{Z_1}$ translates to
\begin{multline*}
\Hom{A}{G}{\SOC{m}{M}}/ \delta_{m-1}\B{\Hom{A}{G}{M}} \\
=\delta_{m}\B{\Hom{A}{G}{M}}/ \delta_{m-1}\B{\Hom{A}{G}{M}}.
\end{multline*}
This implies that $\delta_{m}\B{\Hom{A}{G}{M}}=\Hom{A}{G}{\SOC{m}{M}}$, $1 \leq m \leq \LL{M}$. The same identity holds trivially for $m=0$ and for $m \geq \LL{M}$.
\end{proof}

\section{Projective covers of modules over the ADR algebra}
\label{sec:thma}

We would like to determine the projective covers of modules over the ADR algebra $R_A$ of $A$. For a module $M$ in $\Mod{A}$, the projective cover $p_*$ of $\Hom{A}{G}{M}$ in $\Mod{R_A}$ is the image of an epic $p$, with domain in $\Add{G}$, through the functor $\Hom{A}{G}{-}$. The morphism $p$ is a special kind of map: it is the right minimal $\Add{G}$-approximation of $M$ in $\Mod{A}$.

The problem of finding approximations is hard in general. However, as we shall see in Theorem \ref{customthm:addGapprox}, it is very easy to compute right $\Add{G}$-approximations of rigid modules. Recall that a module is \emph{rigid} if its radical series coincides with its socle series.

Theorem \ref{customthm:addGapprox} (or rather consequences of this result -- Corollary \ref{cor:lemmaa} and Proposition \ref{prop:pink2}) will be very useful when dealing with examples. In Subsection \ref{subsec:counterexample}, we will use Corollary \ref{cor:lemmaa} and Proposition \ref{prop:pink2} to give a counterexample to a claim by Auslander, Platzeck and Todorov (\cite{MR1052903}) about the projective resolutions of modules over the ADR algebra.

\subsection{Approximations of rigid modules}
\label{subsec:approx}
Let $\mathcal{X}$ be a class of $A$-modules. We recall the definition of right $\mathcal{X}$-approximation and of right minimal morphism. A morphism $f: X \longrightarrow M$ in $\Mod{A}$, with $X$ in $\mathcal{X}$, is said to be a \emph{right $\mathcal{X}$-approximation} of $M$ if $\Hom{A}{X'}{f}$ is an epic for all $X'$ in $\mathcal{X}$. A map $f:M \longrightarrow N $ in $\Mod{A}$ is called a \emph{right minimal morphism} if every endomorphism $g: M \longrightarrow M$ satisfying $f=f \circ g$ is an automorphism.

The right $\Add{G}$-approximations of a module $M$ in $\Mod{A}$ are in bijection with epics in $\Mod{R_A}$,
\[
\begin{tikzcd}[ampersand replacement=\&]
\Hom{A}{G}{X} \arrow[two heads]{r} \& \Hom{A}{G}{M},
\end{tikzcd}
\]
where $X \in \Add{G}$. This bijection restricts to a one-to-one correspondence between right minimal $\Add{G}$-approximations in $\Mod{A}$ and projective covers in $\Mod{R_A}$. Since $G$ is a generator, the functor $\Hom{A}{G}{-}$ is particularly well behaved: it is fully faithful and it is such that the projective cover of a module $M$ in $\Mod{A}$ factors through its $\Add{G}$-approximation. The latter statement implies that every right $\Add{G}$-approximation is an epimorphism. 
\begin{thm}
\label{customthm:addGapprox}
Let $M$ be a rigid module in $\Mod{A}$ such that $\LL{M}=m$. The projective cover of $M$ in $\Mod{(A/ \RAD{m}{A})}$ is a right minimal $\Add{G}$-approximation of $M$.
\end{thm}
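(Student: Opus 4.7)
The plan is to take $p : P \to M$ to be the projective cover of $M$ in $\Mod{(A/\RAD{m}{A})}$ and to verify directly that it satisfies both required conditions. My first observation would be that $P \in \Add{G}$: writing $P = \bigoplus_{i=1}^{n} (P_i/\RAD{m}{P_i})^{a_i}$, each summand equals $P_i/\RAD{\min(m,l_i)}{P_i}$, which is an indecomposable summand of $G$ by definition.

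Right minimality I would derive from two easy facts. Since each $P_i/\RAD{m}{P_i}$ has vanishing $m$-th radical power, $\RAD{m}{P}=0$, so the $A$-radical of $P$ coincides with its $A/\RAD{m}{A}$-radical, and the inclusion $\Ker{p}\subseteq\Rad{P}$ coming from $p$ being a projective cover in $\Mod{(A/\RAD{m}{A})}$ persists over $A$. Then, given $\sigma \in \End{A}{P}$ with $p = p\circ\sigma$, the image of $1-\sigma$ lies in $\Ker{p}\subseteq\Rad{P}$, so $\sigma(P)+\Rad{P}=P$; Nakayama's lemma forces $\sigma(P)=P$, and finite generation of $P$ over the Artin algebra $A$ upgrades this to $\sigma$ being an automorphism.

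The heart of the proof is showing that $p$ is a right $\Add{G}$-approximation, which by additivity reduces to lifting every map $g : X' \to M$ to a map $\tilde{g} : X' \to P$ with $p\circ\tilde{g}=g$, where $X'=P_i/\RAD{j}{P_i}$ is an indecomposable summand of $G$. The plan is to work at the level of $P_i$: let $\bar{g} : P_i \twoheadrightarrow X' \xrightarrow{g} M$ and set $j'=\min(j,m)$. Since $\bar{g}(\RAD{j}{P_i})=0$ and $\RAD{m}{A}M=0$, the image $\Ima{\bar{g}}$ is annihilated by $\RAD{j'}{A}$ and thus lies in $\SOC{j'}{M}$. Rigidity of $M$ now enters decisively: it yields $\SOC{j'}{M}=\RAD{m-j'}{M}$. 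Since $p$ is $A$-linear and surjective it restricts to a surjection $\RAD{m-j'}{P} \twoheadrightarrow \RAD{m-j'}{M}$, and projectivity of $P_i$ supplies a lift $\tilde{\bar{g}} : P_i \to \RAD{m-j'}{P}$ of $\bar{g}$.

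I expect the main obstacle to be exactly this step, and the reason for landing inside $\RAD{m-j'}{P}$ rather than all of $P$ is that $\RAD{m-j'}{P}$ is annihilated by $\RAD{j'}{A}$ (because $\RAD{j'}{A}\cdot\RAD{m-j'}{A}\cdot P = \RAD{m}{P} = 0$). This forces $\tilde{\bar{g}}(\RAD{j}{P_i}) \subseteq \tilde{\bar{g}}(\RAD{j'}{P_i}) = 0$, so $\tilde{\bar{g}}$ factors through $X'$ to yield the desired $\tilde{g}$. A naive lift of $\bar{g}$ to $P$ (which exists just from projectivity of $P_i$ and surjectivity of $p$) would in general fail to kill $\RAD{j}{P_i}$, and the use of rigidity to replace $\SOC{j'}{M}$ with $\RAD{m-j'}{M}$ is precisely what allows one to refine the lift into the smaller submodule and thereby secure the factorization through $X'$.
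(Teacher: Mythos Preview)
Your proof is correct and follows essentially the same route as the paper: both arguments use rigidity to convert $\SOC{j}{M}$ into $\RAD{m-j}{M}$, lift into $\RAD{m-j}{P}$ via the surjectivity of $\RAD{m-j}{p}$, and then exploit that $\RAD{m-j}{P}$ is annihilated by $\RAD{j}{A}$ to obtain the factorisation through $P_i/\RAD{j}{P_i}$. The only cosmetic differences are that the paper splits into the cases $j\geq m$ and $j<m$ and lifts directly from $P_i/\RAD{j}{P_i}$ using its projectivity in $\Mod{(A/\RAD{j}{A})}$, whereas you treat both cases uniformly via $j'=\min(j,m)$ and lift from $P_i$ before descending; the paper also takes right minimality for granted as part of the definition of projective cover, while you spell it out via Nakayama.
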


\begin{proof}
Let $M$ be a rigid module with Loewy length $m$. Consider the projective cover of $M$ as an $(A/ \RAD{m}{A})$-module,
\[
\varepsilon: P_0\B{M} \longrightarrow M.
\]
We want to prove that $\varepsilon$ is a right minimal $\Add{G}$-approximation. By definition, $\varepsilon$ is a right minimal morphism, so it is enough to prove that every map $f:P_i/\RAD{j}{P_i} \longrightarrow M$, with $(i,j) \in \Lambda$, factors through $\varepsilon$. Note that this holds for $j \geq m$, as $\varepsilon$ is an epic in $\Mod{(A/\RAD{j}{A})}$ and $P_i/ \RAD{j}{P_i}$ is a projective $(A/ \RAD{j}{A})$-module. So suppose that $j < m$. Then
\[
\Ima{f} \subseteq \SOC{j}{M} = \RAD{m-j}{M},
\]
using that $M$ is rigid. Observe that both $\RAD{m-j}{M}$ and $\RAD{m-j}{(P_0\B{M})}$ are annihilated by $\RAD{j}{A}$, i.e.~they lie in $\Mod{(A/\RAD{j}{A})}$. Now note that the functor $\RAD{m-j}{\B{-}}$ preserves epics. This can be seen directly, or can be deduced by looking at Example \ref{ex:socrad} and Remark \ref{rem:dual-hereditary-cohereditary}, recalling that the composition of cohereditary preradicals is still a cohereditary preradical. Therefore we have the diagram
\[
\begin{tikzcd}[ampersand replacement=\&]
P_i/ \RAD{j}{P_i} \arrow{rr}{f}\arrow[dashed]{dd}{\exists\, t} \arrow[two heads]{rd} \& \& M \\
 \& \Ima{f} \arrow[hook]{d}\arrow[hook]{ur} \& \\
 \RAD{m-j}{(P_0\B{M})} \arrow[two heads]{r}{\RAD{m-j}{\varepsilon}} \& \RAD{m-j}{M} \arrow[hook]{ruu}[swap]{\iota_M}.
\end{tikzcd},
\]
where $t$ exists because $P_i/ \RAD{j}{P_i} $ is a projective in $\Mod{(A/\RAD{j}{A})}$. Thus
\[
f= \iota_M \circ (\RAD{m-j}{\varepsilon}) \circ t = \varepsilon \circ \iota_{P_0\B{M}} \circ t,
\]
where $\iota_{P_0\B{M}}$ denotes the inclusion of $\RAD{m-j}{(P_0\B{M})}$ in $P_0\B{M}$.
\end{proof}
As an immediate consequence of Theorem \ref{customthm:addGapprox}, we get the following result.
\begin{cor}
\label{cor:lemmaa}
Let $M$ be a rigid module in $\Mod{A}$ with $\LL{M}=m$. Suppose that $\varepsilon$ is the projective cover of $M$ in $\Mod{(A/ \RAD{m}{A})}$. Then $\Hom{A}{G}{\varepsilon}$ is the projective cover of $\Hom{A}{G}{M}$ in $\Mod{R_A}$.
\end{cor}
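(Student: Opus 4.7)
The plan is to derive the corollary directly from Theorem \ref{customthm:addGapprox} using the standard correspondence — outlined in the paragraph preceding that theorem — between right minimal $\Add{G}$-approximations of $M$ in $\Mod{A}$ and projective covers of $\Hom{A}{G}{M}$ in $\Mod{R_A}$, mediated by the fully faithful functor $\Hom{A}{G}{-}$.

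First I would verify that the domain $P_0\B{M}$ of $\varepsilon$ lies in $\Add{G}$. The indecomposable projectives in $\Mod{(A/\RAD{m}{A})}$ are of the form $P_i/\RAD{m}{P_i}$, which collapses to $P_i$ whenever $l_i \leq m$; in either case, each such module appears as a direct summand of $G = \bigoplus_{i,j} P_i/\RAD{j}{P_i}$. Hence $P_0\B{M} \in \Add{G}$, so Theorem \ref{customthm:addGapprox} applies and tells us that $\varepsilon$ is a right minimal $\Add{G}$-approximation of $M$.

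Next I would apply $\Hom{A}{G}{-}$. Since $G$ is a generator of $\Mod{A}$ and $R_A = \End{A}{G}\op$, this functor restricts to an equivalence from $\Add{G}$ onto the category of finitely generated projective $R_A$-modules. Consequently $\Hom{A}{G}{\varepsilon}$ is an epimorphism whose source is a projective $R_A$-module, and right minimality of $\varepsilon$ transfers across the equivalence to right minimality of $\Hom{A}{G}{\varepsilon}$. A right minimal epimorphism from a projective is, by definition, a projective cover, which yields the claim.

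There is no genuine obstacle here — the argument is simply a transport of Theorem \ref{customthm:addGapprox} across the equivalence $\Add{G} \simeq \PROJ{R_A}$. The only substantive point requiring attention is the verification that $P_0\B{M}$ lies in $\Add{G}$, so that the conclusion of the theorem can be invoked in the first place.
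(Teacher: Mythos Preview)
Your proposal is correct and matches the paper's intended argument: the paper presents the corollary as an immediate consequence of Theorem~\ref{customthm:addGapprox} together with the one-to-one correspondence, stated just before the theorem, between right minimal $\Add{G}$-approximations of $M$ and projective covers of $\Hom{A}{G}{M}$ in $\Mod{R_A}$. You have simply spelled out the details the paper leaves implicit, including the verification that $P_0(M)\in\Add{G}$; nothing further is needed.
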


The simple modules over the ADR algebra $R_A$ are ``linked to each other" in a neat way. When all projective indecomposable modules are rigid then the `glueing' of the simple modules (and of the standard modules) is even nicer.
\begin{prop}
\label{prop:pink2}
Let $(i,j)$ and $(k,l)$ be in $\Lambda$. Then $\Ext{R_A}{1}{L_{i,j}}{L_{k,l}}\neq 0$ implies that either $(k,l)=(i,j+1)$ or $l \leq j-1$.  If the $A$-module $P_i/\RAD{j}{P_i}$ is rigid then $\Ext{R_A}{1}{L_{i,j}}{L_{k,l}}\neq 0$ implies that either $(k,l)=(i,j+1)$ or $l = j-1$. In particular, the latter statement holds when all the projective indecomposable $A$-modules are rigid.
\end{prop}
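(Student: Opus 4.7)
The plan is to identify $\dim \Ext{R_A}{1}{L_{i,j}}{L_{k,l}}$ with the multiplicity of $L_{k,l}$ in $\Rad{P_{i,j}}/\RAD{2}{P_{i,j}}$, and to control this through a careful analysis of the short exact sequence \eqref{eq:elreidsebastiao}. Since $P_{i,j}$ and $\Delta\B{i,j}$ share the simple top $L_{i,j}$, the kernel $\Hom{A}{G}{\Rad{P_i}/\RAD{j}{P_i}}$ is contained in $\Rad{P_{i,j}}$; this yields a short exact sequence
\[
0 \longrightarrow \Hom{A}{G}{N} \longrightarrow \Rad{P_{i,j}} \longrightarrow \Delta\B{i,j+1} \longrightarrow 0,
\]
where $N := \Rad{P_i}/\RAD{j}{P_i}$ and, by part~2 of Proposition \ref{prop:newprop}, $\Rad{\Delta\B{i,j}}$ is identified with $\Delta\B{i,j+1}$ (read as $0$ when $j=l_i$). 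Right exactness of the top functor then confines every simple appearing in $\Rad{P_{i,j}}/\RAD{2}{P_{i,j}}$ to be either $L_{i,j+1}$ or a composition factor of the top of $\Hom{A}{G}{N}$.

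For the first (general) statement, I would observe that $N$ has Loewy length at most $j-1$. Theorem \ref{thm:socdelta} together with Lemma \ref{lem:soc0} then ensure that $\Hom{A}{G}{N}$ is $\Delta$-good with $\Delta$-semisimple filtration whose layers are direct sums of standard modules $\Delta\B{x,s}$ for $1 \leq s \leq \LL{N} \leq j-1$. Applying right exactness of the top functor inductively along a $\Delta$-filtration of $\Hom{A}{G}{N}$, its top is a quotient of the direct sum of the simple tops $L_{x,s}$ of the $\Delta$-factors; hence every $L_{k,l}$ appearing in the top of $\Hom{A}{G}{N}$ satisfies $l \leq j-1$, which gives the first claim.

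For the second (rigid) statement, I would first verify that $N$ inherits rigidity from $P_i/\RAD{j}{P_i}$: for $1 \leq s \leq j-1$ the chain
\[
\SOC{s}{N} = \SOC{s}{(P_i/\RAD{j}{P_i})} \cap N = \RAD{j-s}{(P_i/\RAD{j}{P_i})} = \RAD{j-1-s}{N}
\]
shows that $N$ is rigid of Loewy length exactly $j-1$. Corollary \ref{cor:lemmaa} then identifies the projective cover of $\Hom{A}{G}{N}$ in $\Mod{R_A}$ as $\Hom{A}{G}{\varepsilon}$, where $\varepsilon \colon P_0\B{N} \twoheadrightarrow N$ is the projective cover of $N$ in $\Mod{(A/\RAD{j-1}{A})}$. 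Since $P_0\B{N}$ is a direct sum of modules of the form $P_k/\RAD{j-1}{P_k}$, the projective $R_A$-module $\Hom{A}{G}{P_0\B{N}}$ is a direct sum of indecomposables $P_{k,j-1}$, whose top contains only simples of type $L_{k,j-1}$; combined with the first paragraph, this forces $l=j-1$.

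The main potentially delicate step is the inheritance of rigidity by $N=\Rad{P_i}/\RAD{j}{P_i}$, which reduces to the characterising identity $\SOC{s}{X} = \RAD{\LL{X}-s}{X}$ for rigid modules $X$. The rest of the argument is a direct assembly of Theorem \ref{thm:socdelta}, Corollary \ref{cor:lemmaa} and the structural facts about RUSQ algebras recorded in Proposition \ref{prop:newprop}.
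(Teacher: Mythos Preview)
Your proof is correct and follows essentially the same route as the paper: reduce via the short exact sequence \eqref{eq:elreidsebastiao} to analysing the top of $\Hom{A}{G}{N}$ with $N=\Rad{P_i}/\RAD{j}{P_i}$, bound the second index there, and in the rigid case invoke Corollary~\ref{cor:lemmaa}. The only notable difference is in how you establish $l\leq j-1$ for the general statement: the paper cites Auslander's description of the generators of $\Hom{A}{G}{N}$ (equivalently, the trace identity \eqref{eq:teresapreguicosa} in Lemma~\ref{lem:soc0}), whereas you pass through the $\Delta$-semisimple filtration of Theorem~\ref{thm:socdelta} and right exactness of the top functor---a slightly longer but entirely valid variant using the paper's own machinery.
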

\begin{proof}
Observe that $\Ext{R_A}{1}{L_{i,j}}{L_{k,l}} \neq 0$ if and only if the simple module $L_{k,l}$ is a summand of $\Rad{P_{i,j}}/ \RAD{2}{P_{i,j}}$. The short exact sequence \eqref{eq:elreidsebastiao} in the statement of Theorem \ref{prop:standard} gives rise to the exact sequence
\[
\begin{tikzcd}[ampersand replacement=\&, column sep=scriptsize]
0 \arrow{r} \& \Hom{A}{G}{\Rad{P_i}/\RAD{j}{P_i}} \arrow{r} \& \Rad{P_{i,j}} \arrow{r} \& \Rad{\Delta\B{i,j}} \arrow{r} \& 0
\end{tikzcd},
\]
where $\Rad{\Delta\B{i,j}}=\Delta\B{i,j+1}$. If $L_{k,l}$ is a summand of the top of $\Rad{P_{i,j}}$ then either $(k,l)=(i,j+1)$ or $L_{k,l}$ is a summand of the top of $\Hom{A}{G}{\Rad{P_i}/\RAD{j}{P_i}}$. In the latter case, we must have $l \leq j-1$ by \cite[Proposition $10.2$]{MR0349747} (see also \cite[Lemma $3.3$]{MR3510398}).

If $P_i / \RAD{j}{P_i}$ is rigid, then $\Rad{P_i}/\RAD{j}{P_i}$ is also rigid. In this case, Corollary~\ref{cor:lemmaa} implies that the summands of the top of $\Hom{A}{G}{\Rad{P_i}/\RAD{j}{P_i}}$ are of type $L_{k,j-1}$.
\end{proof}

The next example shows that the rigidity condition in the statement of Theorem \ref{customthm:addGapprox} cannot be omitted. 
\begin{ex}
\label{ex:quiver}
Consider the quiver
\[
Q=
\begin{tikzcd}[ampersand replacement=\&]
 \& 1 \arrow{ld}{\alpha} \arrow{d}{\beta} \arrow{rd}{\gamma} \&\\
2 \& 3 \arrow{d}{\varepsilon} \& 4 \arrow{d}{\eta} \\
\& 5 \& 6
\end{tikzcd}
\]
and the path algebra $A=KQ$. Let $M$ be the $A$-module $P_1/L_6$, that is,
\[
M:=\begin{tikzcd}[ampersand replacement=\&, row sep =tiny, column sep = tiny]
 \& 1 \arrow[dash]{ld} \arrow[dash]{d} \arrow[dash]{rd} \&\\
2\& 3 \arrow[dash]{d} \& 4 \\
\& 5 \& 
\end{tikzcd}.
\]
Observe that $\LL{M}=\LL{A}=3$, and that $M$ is not a rigid module. 

Consider the epic $\pi: P_1 \longrightarrow M$ and note that the simple module $L_4$ can be embedded in $M$. It is not difficult to check that the epic
\begin{equation}
\label{eq:usingtheepic}
\begin{bmatrix}
\pi & 1_{L_4}
\end{bmatrix}:
P_1 \oplus L_4 \longrightarrow M
\end{equation}
is a right minimal $\Add{G}$-approximation of $M$. This map is not a projective cover of $M$, so the rigidity condition in the statement of Theorem \ref{customthm:addGapprox} is necessary. 

Using the approximation \eqref{eq:usingtheepic}, one easily sees that the $R_A$-module $\Hom{A}{G}{M}$ can be represented as
\[
\begin{tikzcd}[ampersand replacement=\&, row sep =tiny, column sep = tiny]
 \& (1,3) \arrow[dash]{ld} \arrow[dash]{d} \arrow[dash]{rd} \& (4,1)\arrow[dash]{d} \\
(2,1) \& (3,2) \arrow[dash]{d} \& (4,2) \\
\& (5,1) \& 
\end{tikzcd}.
\]
\end{ex}
\subsection{An application}
\label{subsec:counterexample}

Motivated in part by the theory of quasihereditary algebras, Auslander, Platzeck and Todorov studied in \cite{MR1052903} the homological properties of idempotent ideals. In this paper the authors defined a new class of algebras -- the Artin algebras satisfying the descending Loewy length condition -- and proved, in Theorem $7.3$, \cite{MR1052903}, that every such algebra is quasihereditary. 
\begin{defi}[{\cite[§7]{MR1052903}}]
\label{def:dllc}
An Artin algebra $B$ satisfies the \emph{descending Loewy length condition} (DLL condition, for short) if for every $M$ in $\Mod{B}$, a minimal projective resolution 
\[
\begin{tikzcd}[ampersand replacement=\&]
\cdots\arrow{r}
      \& P_{i}\B{M} \arrow{r}
        \& \cdots \arrow{r}
        \& P_0\B{M} \arrow{r}{\varepsilon}
        \& M \arrow{r}
            \& 0
\end{tikzcd}
\]
satisfies $\LL{P_{i+1}\B{M}} < \LL{P_{i}\B{M}}$, for all $i \geq 1$ such that $P_i\B{M} \neq 0$.
\end{defi}

In \cite{MR1052903} the authors claim that the Artin algebras of global dimension 2, the ADR algebras $R_A$, and the $l$-hereditary algebras (introduced in \cite{MR607166}) all satisfy the DLL condition. The main purpose of Theorem $7.3$ in \cite{MR1052903} was thus to give a unified proof of results in \cite{MR987824}, \cite{MR943793} and \cite{MR964453}, already established in the literature.

It is not difficult to check that Artin algebras of global dimension $2$ and that $l$-hereditary algebras do satisfy the DLL condition. Unfortunately, it is not true that the ADR algebra $R_A$ satisfies the DLL condition for every choice of $A$. As we shall see, the Loewy length of the projectives in a projective resolution in $\Mod{R_A}$ may increase by an arbitrarily large number.

In order to see this, consider the following example: define $A:= KQ/ I$, where $K$ is a field, $Q$ is the quiver
\[
Q=
\begin{tikzcd}[ampersand replacement=\&]
\overset{1}{\circ} \arrow[loop left]{}{\varepsilon} \arrow[bend left]{r}{\alpha_1}\& \overset{2}{\circ}\arrow[bend left]{l}{\beta_1} \arrow[bend left]{r}{\alpha_2}\&  \overset{3}{\circ} \arrow[bend left]{l}{\beta_2} 
\end{tikzcd}
\]
and $I$ is the admissible ideal
\[
I= \langle \alpha_2\alpha_1, \, \beta_1 \beta_2, \, \beta_2 \alpha_2 - \alpha_1 \beta_1, \, \varepsilon \beta_1, \, \alpha_1 \varepsilon, \, \varepsilon^n \rangle ,
\]
with $n >1$ fixed. 

We may represent the projective indecomposable $A$-modules as
\[
\begin{tikzcd}[ampersand replacement=\&, row sep =tiny, column sep = tiny]
\& 1 \arrow[dash]{dl} \arrow[dash]{dr} \& \\
1  \arrow[dash]{d} \& \& 2 \arrow[dash]{d} \\
1 \arrow[dash]{d} \& \& 1 \\
\vdots \arrow[dash]{d} \& \& \\
1  \& \&
\end{tikzcd} , \, \,
\begin{tikzcd}[ampersand replacement=\&, row sep =tiny, column sep = tiny]
 \& 2 \arrow[dash]{dl} \arrow[dash]{dr} \& \\
 1  \arrow[dash]{dr} \& \& 3 \arrow[dash]{dl} \\
 \& 2\&  
\end{tikzcd}, \,\,
\begin{tikzcd}[ampersand replacement=\&, row sep =tiny, column sep = tiny]
3 \arrow[dash]{d}
\\ 2  \arrow[dash]{d} \\
3
\end{tikzcd}.
\]

Note that the $A$-module $L_3$ is in the socle of $P_3$. Thus, using the labelling in \eqref{eq:posetadr}, the $R_A$-module $P_{3,3}$ contains a copy of $\Delta\B{3,1}$. The module $\Delta\B{3,1}$ has socle $L_{3,3}$, so we may consider the corresponding quotient module $M:=P_{3,3}/ L_{3,3}$.

\begin{prop}
\label{label:annedorme}
Let $A$ be the algebra introduced previously and consider the corresponding ADR algebra $R_A$. Let $M$ be the $R_A$-module defined above. The DLL condition fails for the $R_A$-module $M$ when $n \geq 5$. Indeed, we have $\LL{P_1\B{M}}\leq 6$ and $\LL{P_2\B{M}}\geq 1+n$, so $\LL{P_2\B{M}}\geq \LL{P_1\B{M}}$ for $n \geq 5$.
\end{prop}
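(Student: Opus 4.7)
The plan is to explicitly compute the first two syzygies of $M$ in $\Mod{R_A}$, read off Loewy length bounds on $P_1(M)$ and $P_2(M)$, and contrast them.

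First I determine the start of the minimal projective resolution. Applying the short exact sequence \eqref{eq:elreidsebastiao} with $(i,j)=(3,3)$ --- using $\RAD{3}{P_3}=0$ and $\Delta(3,3)=L_{3,3}$ from Proposition \ref{prop:newprop}(2) --- identifies $\Rad{P_{3,3}}=\Hom{A}{G}{\Rad{P_3}}$. The submodule $L_{3,3}$ used to form $M$ is $\Soc{\Delta(3,1)}$, and Lemma \ref{lem:soc0} applied to the socle of $\Rad{P_3}$ places $\Delta(3,1)\subseteq \Hom{A}{G}{\Rad{P_3}}=\Rad{P_{3,3}}$. Thus $L_{3,3}$ sits strictly inside $\Rad{P_{3,3}}$, so $M$ inherits the simple top $L_{3,3}$ of $P_{3,3}$, giving $P_0(M)=P_{3,3}$, $\Omega M=L_{3,3}$, $P_1(M)=P_0(L_{3,3})=P_{3,3}$, and $\Omega^2 M=\Rad{P_{3,3}}=\Hom{A}{G}{\Rad{P_3}}$. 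By Theorem \ref{thm:socdelta} the $\Delta$-filtration of $P_{3,3}$ has factors $\Delta(3,3), \Delta(2,2), \Delta(3,1)$, contributing $1+2+3=6$ composition factors, so $\LL{P_1(M)}\leq 6$.

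To compute $P_2(M)$, I use that $\Rad{P_3}$ is uniserial (hence rigid) of Loewy length $2$. Corollary \ref{cor:lemmaa} then identifies $P_2(M)=P_0(\Hom{A}{G}{\Rad{P_3}})$ with $\Hom{A}{G}{P_2/\RAD{2}{P_2}}=P_{2,2}$. To bound $\LL{P_{2,2}}$ from below, I apply \eqref{eq:elreidsebastiao} with $(i,j)=(2,2)$: since $\Rad{P_2}/\RAD{2}{P_2}=L_1\oplus L_3$, the sequence yields an embedding $\Delta(1,1)\oplus\Delta(3,1)\hookrightarrow P_{2,2}$ whose image lies in $\Rad{P_{2,2}}$ (the quotient $\Delta(2,2)$ has simple top $L_{2,2}$). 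Proposition \ref{prop:newprop}(3) says $\Delta(1,1)$ is uniserial of length $l_1=n$, so $\RAD{n-1}{\Delta(1,1)}\neq 0$; combined with $\Delta(1,1)\subseteq \Rad{P_{2,2}}$, the standard fact that $\RAD{i}{N}\subseteq \RAD{i+1}{M'}$ whenever $N\subseteq \Rad{M'}$ (obtained by iterating multiplication by the Jacobson radical) yields $\RAD{n}{P_{2,2}}\neq 0$, hence $\LL{P_{2,2}}\geq n+1$.

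Putting the two bounds together, $\LL{P_2(M)}\geq n+1 \geq 6 \geq \LL{P_1(M)}$ for $n\geq 5$, violating the DLL condition of Definition \ref{def:dllc}. The only delicate step is the first: confirming that the $L_{3,3}$ quotiented out of $P_{3,3}$ sits inside $\Rad{P_{3,3}}$, so that $M$ and $P_{3,3}$ share the same simple top. All remaining calculations follow routinely from the RUSQ toolkit (Proposition \ref{prop:newprop}, Corollary \ref{cor:lemmaa}, Theorem \ref{thm:socdelta}) and the explicit $\Delta$-filtration structure already recorded.
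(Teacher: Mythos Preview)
Your proof is correct and follows essentially the same route as the paper: both compute $P_1(M)=P_{3,3}$ and $P_2(M)=P_{2,2}$ via the identification $P_{i+1}(M)=P_i(L_{3,3})$ and Corollary~\ref{cor:lemmaa}, and both bound $\LL{P_{2,2}}\geq n+1$ by embedding $\Delta(1,1)$. The only minor difference is that the paper obtains $\LL{P_{3,3}}\leq 6$ by a direct dimension count of $\Hom{A}{G}{P_3}$, whereas you read off the six composition factors from the $\Delta$-semisimple filtration via Lemma~\ref{lem:soc0}; both arguments yield the same Jordan--H\"older length.
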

\begin{proof}
Using that $\LL{P_3}=3$, together with Theorem \ref{prop:standard}, we conclude that $\Rad{P_{3,3}}=\Hom{A}{G}{\Rad{P_3}}$. Since $\Rad{P_3}$ is rigid, Corollary \ref{cor:lemmaa} implies that the minimal projective presentation of $L_{3,3}$ is of the form
\[
\begin{tikzcd}[ampersand replacement=\&]
P_{2,2} \arrow{r} \& P_{3,3} \arrow{r} \& L_{3,3} \arrow{r} \& 0.
\end{tikzcd}
\]
We claim that $\LL{P_{3,3}}\leq 6$ and $ \LL{P_{2,2}}\geq 1+n \geq 5$. Note that this will imply the statement in the proposition, as $P_{i+1}\B{M}=P_i\B{L_{3,3}}$.

We start by showing that $ \LL{P_{2,2}}\geq 1+n $. To see this, note that the $A$-module $L_1$ is in the socle of $P_2/\RAD{2}{P_2}$. Thus, the $R_A$-module $P_{2,2}$ contains a copy of $\Delta\B{1,1}$. Note that $\LL{\Delta\B{1,1}}=n$ (see Proposition \ref{prop:newprop}), so $\LL{P_{2,2}}\geq 1+n$ (actually this is an equality).

Using that $P_{3,3}=\Hom{A}{G}{P_3}$, we deduce through a routine computation that $P_{3,3}$ has dimension $6$. Consequently, $P_{3,3}$ has Jordan--H\"{o}lder length $6$ and $\LL{P_{3,3}} \leq 6$.
\end{proof}

We have just shown that the module $P_{3,3}$ has Loewy length at most 6. One can actually prove that $\LL{P_{3,3}}=5$ and, as a consequence, refine the statement of Proposition \ref{label:annedorme} for $n \geq 4$.

In order to emphasise the usefulness of the results in Subsection \ref{subsec:approx}, we compute the exact Loewy length of $P_{3,3}$.
\begin{lem}
The module $P_{3,3}$ has Loewy length equal to $5$.
\end{lem}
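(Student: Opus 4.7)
The plan is to apply Corollary~\ref{cor:lemmaa} and Proposition~\ref{prop:pink2} iteratively, reducing the computation of $\LL{P_{3,3}}$ to the analysis of a short exact sequence involving $P_{3,1}$ and $L_{2,3}$. The proof of Proposition~\ref{label:annedorme} already identifies $\Rad{P_{3,3}} = \Hom{A}{G}{\Rad{P_3}}$, so one further application of Corollary~\ref{cor:lemmaa} will pin down its projective presentation, after which the $\Delta$-filtration of $P_{2,2}$ should reduce the problem to a small Loewy-length question.

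First I would exploit the rigidity of the $A$-module $\Rad{P_3}$, which has Loewy length $2$ and simple top $L_2$. By Corollary~\ref{cor:lemmaa} its minimal $\Add{G}$-approximation is the projection $\pi \colon P_2/\RAD{2}{P_2} \twoheadrightarrow \Rad{P_3}$, $e_2 \mapsto \beta_2 e_3$, whose kernel is $L_1 = \langle \beta_1 e_2 \rangle$. Applying $\Hom{A}{G}{-}$ yields a short exact sequence $0 \to \Delta\B{1,1} \to P_{2,2} \to \Rad{P_{3,3}} \to 0$, so $\Rad{P_{3,3}} \cong P_{2,2}/P_{1,1}$ (using $\Delta\B{1,1} = P_{1,1}$). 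Since $P_{2,2}/P_{1,1}$ has simple top $L_{2,2}$, the computation reduces to establishing $\LL{\Rad{P_{2,2}}/P_{1,1}} = 3$. Using the canonical $\Delta$-filtration of $P_{2,2}$ from Theorem~\ref{prop:standard} one obtains $0 \to P_{1,1} \oplus P_{3,1} \to \Rad{P_{2,2}} \to \Rad{\Delta\B{2,2}} = L_{2,3} \to 0$, and quotienting by $P_{1,1}$ produces the short exact sequence $0 \to P_{3,1} \to \Rad{P_{2,2}}/P_{1,1} \to L_{2,3} \to 0$, with $P_{3,1} = \Delta\B{3,1}$ uniserial of Loewy length~$3$.

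The main obstacle is that this last sequence need not split: a diagram chase through the composition series of $P_{3,1}$ using Proposition~\ref{prop:pink2} shows $\Ext{R_A}{1}{L_{2,3}}{P_{3,1}} \cong \Ext{R_A}{1}{L_{2,3}}{L_{3,2}} \neq 0$. Fortunately Proposition~\ref{prop:pink2} also forces $\Ext{R_A}{1}{L_{2,3}}{L_{3,1}} = 0$ (a non-zero such Ext would require the second index to equal $2$, whereas $L_{3,1}$ has second index $1$), so $L_{3,1}$ is never absorbed by $L_{2,3}$ and remains at the top of $\Rad{P_{2,2}}/P_{1,1}$ regardless of the extension class. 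Consequently, in both the split and the non-split case one reads off the Loewy layers of $\Rad{P_{2,2}}/P_{1,1}$ as $L_{3,1} \oplus L_{2,3}$, then $L_{3,2}$, then $L_{3,3}$, giving $\LL{\Rad{P_{2,2}}/P_{1,1}} = 3$. Stacking the radical steps then yields $\LL{P_{3,3}} = 1 + 1 + 3 = 5$.
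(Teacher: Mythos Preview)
Your proof is correct, and it takes a genuinely different (and arguably cleaner) route from the paper's.

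The paper works directly with $N_1 = \Hom{A}{G}{\Rad P_3}$ and invokes the $\Delta$-semisimple filtration theory of Section~\ref{sec:manuela}: by Lemma~\ref{lem:soc0} and Theorem~\ref{thm:socdelta} it obtains the filtration $0 \subset \Delta(3,1) \subset N_1$ with top factor $\Delta(2,2)$, then uses a pullback along $L_{2,3} \hookrightarrow \Delta(2,2)$ and the functors $1/\SOC{m}{}$ to locate each composition factor in the socle layers of $N_1$, appealing to Proposition~\ref{prop:pink2} to pin down the structure of the pulled-back submodule. Your approach instead applies Corollary~\ref{cor:lemmaa} a second time to realise $N_1$ concretely as $P_{2,2}/P_{1,1}$, then reads off the short exact sequence $0 \to \Delta(3,1) \to \Rad N_1 \to L_{2,3} \to 0$ from the canonical $\Delta$-filtration of $P_{2,2}$ in Theorem~\ref{prop:standard}; the vanishing $\Ext{R_A}{1}{L_{2,3}}{L_{3,1}} = 0$ from Proposition~\ref{prop:pink2} then forces $M/\Delta(3,2) \cong L_{3,1}\oplus L_{2,3}$, so $\Rad M \subseteq \Delta(3,2)$ and $\LL M = 3$. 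Your route avoids Section~\ref{sec:manuela} entirely and needs only Corollary~\ref{cor:lemmaa}, Theorem~\ref{prop:standard} and one Ext-vanishing; the paper's route, by contrast, is meant to illustrate the $\Delta$-semisimple filtration machinery. One small remark: your aside that $\Ext{R_A}{1}{L_{2,3}}{P_{3,1}} \cong \Ext{R_A}{1}{L_{2,3}}{L_{3,2}}$ is not needed for the argument and would require a bit more justification (one gets an injection from the long exact sequence, but surjectivity depends on $\Ext{R_A}{2}{L_{2,3}}{L_{3,3}}$); since the proof only uses $\Ext{R_A}{1}{L_{2,3}}{L_{3,1}} = 0$, you could simply drop that remark.
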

\begin{proof}
Observe that
\begin{align*}
\LL{P_{3,3}} &=1 + \LL{\Rad{P_{3,3}}} \\
& =1 + \LL{\Hom{A}{G}{\Rad{P_3}}} .
\end{align*}
Define $N_1:=\Hom{A}{G}{\Rad{P_3}}$. We claim that $\LL{N_1}=4$.

By Lemma \ref{lem:soc0} and Theorem \ref{thm:socdelta}, the module $N_1$ has a $\Delta$-semisimple filtration
\[
0 \subset \Hom{A}{G}{L_3} \subset N_1,
\]
with factors $\Delta\B{3,1}$ and $\Delta\B{2,2}$. In particular, $N_1$ has socle $L_{3,3}$. Consider now the pullback diagram
\[\begin{tikzcd}[ampersand replacement=\&]
0 \arrow{r}\& \Delta\B{3,1} \arrow{r}\arrow[equal]{d} \& N'_2\arrow{r}\arrow[hook]{d} \& L_{2,3} \arrow[hook]{d} \arrow{r}\& 0 \\
0 \arrow{r} \& \Delta\B{3,1} \arrow{r} \& N_1 \arrow{r} \& \Delta\B{2,2} \arrow{r} \& 0
\end{tikzcd}.
\]
There is a (unique) submodule $N_2$ of $N_1$ with $\Top{N_2}=L_{2,3}$. In fact, by looking at the diagram above we see that $N_2 \subseteq N'_2$, so
\[
\Rad{N_2} \subseteq \Rad{N_2'} \subseteq \Delta\B{3,1}.
\]
Note that $P_2$ is rigid. Using Proposition \ref{prop:pink2} (and the structure of $\Delta (3,1)$), we conclude that $\Rad{N_2}$ must have top $L_{3,2}$. Thus, $N_2$ has the following structure
\[N_2=
\begin{tikzcd}[ampersand replacement=\&, row sep =tiny, column sep = tiny]
(2,3) \arrow[dash]{d}
\\ (3,2) \arrow[dash]{d} \\
(3,3)
\end{tikzcd}.
\]
Recall that Lemma \ref{lem:hereditary} applies to $\Soc{\B{-}}$ and $\SOC{2}{\B{-}}$. The image of the monic $\Delta (3,1)\longrightarrow N_1$ through the functor $1/\Soc{(-)}$ gives rise to the inclusion $L_{3,2} \subseteq \SOC{2}{N_1}/\SOC{1}{N_2}$. By applying $1/\SOC{2}{(-)}$ to the monics $\Delta (3,1) \longrightarrow N_1$ and $N_2 \longrightarrow N_1$ we deduce that
\[L_{3,1} \oplus L_{2,3} \subseteq \SOC{3}{N_1}/\SOC{2}{N_1} .\]
Now observe that $\Rad{P_3}$ is a rigid module. Corollary \ref{cor:lemmaa} implies that $\Top{N_1}=L_{2,2}$. Since $N_1$ has exactly 5 composition factors (and we have looked at them all), we conclude that $\LL{N_1}=4$. This proves the result.
\end{proof}
\begin{rem}
In \cite{MR1216693}, Lin and Xi extended Dlab and Ringel's result in \cite{MR943793} to endomorphism algebras of semilocal modules. The authors noticed that this class of algebras (which contains the ADR algebra) does not generally satisfy the DLL condition (see Example 3 in \cite{MR1216693}).
\end{rem}
\begin{rem}
Although the DLL condition does not hold for the ADR algebra $R_A$ in general, $R_A$ satisfies a property similar to the DLL condition. The following was implicitly proved in \cite{MR0349747}, within the proof of Proposition~$10.2$. 

Let $M$ be in $\Mod{A}$ with $\LL{M}=m$, and let $\varepsilon:X \longrightarrow M$ be the right minimal $\Add{G_{A/\RAD{m}{A}}}$-approximation of $M$. Then $\varepsilon$ is the right minimal $\Add{G}$-approximation of $M$ and $\LL{\Ker{\varepsilon}} < m=\LL{X}$. 

As a consequence, the projective resolutions in $\Mod{R_A}$ come from exact sequences in $\Mod{A}$ whose Loewy length decreases strictly. To be precise, for every $N$ in $\Mod{R_A}$ there is an exact sequence of $A$-modules
\[
\begin{tikzcd}[ampersand replacement=\&]
  0 \arrow{r} \&
    X_t \arrow{r} \&
      \cdots \arrow{r} \&
        X_2 \arrow{r} \&
          X_1 \arrow{r} \&
            X_0
\end{tikzcd},\]
with $X_i$ in $\Add{G}$ satisfying $\LL{X_{i+1}} < \LL{X_i}$ for all $i \geq 1$, such that
\[
\begin{tikzcd}[ampersand replacement=\&]
  0 \arrow{r} \&
    \Hom{A}{G}{X_t} \arrow{r} \&
      \cdots \arrow{r} \&
            \Hom{A}{G}{X_0} \arrow{r}{\varepsilon} \&
   N \arrow{r} \& 0
\end{tikzcd}
\]
is a minimal projective resolution for $N$ (see §$3.3.2$ in \cite{thesis} for details). 
\end{rem}

\bibliographystyle{amsplain}
\bibliography{QHAlg}

\end{document}